\documentclass[10pt,oneside,a4paper]{amsart}
\numberwithin{equation}{section}
\usepackage{amsmath,amsfonts,amsthm, amssymb}
\usepackage{array}
\usepackage[all,textures]{xy}
\usepackage{portland}

\theoremstyle{plain}
\newtheorem{theorem}{Theorem}[section]
\newtheorem{proposition}[theorem]{Proposition}
\newtheorem{lemma}[theorem]{Lemma}
\newtheorem{corollary}[theorem]{Corollary}

\theoremstyle{definition}
\newtheorem{definition}[theorem]{Definition}

\theoremstyle{remark}

\newtheorem{remark}[theorem]{Remark}

\renewcommand{\lim}{\mathrm{lim}}

\newcommand{\Ext}{\mathrm{Ext}}

\newcommand{\Hom}{\mathrm{Hom}}

\newcommand{\RHom}{\mathrm{RHom}}

\newcommand{\op}{^{\mathrm{op}}}
\newcommand{\Ob}{\mathrm{Ob}}

\newcommand{\AAA}{\mathfrak{a}}
\newcommand{\BBB}{\mathfrak{b}}

\newcommand{\RRR}{\mathfrak{r}}

\newcommand{\TTT}{\mathfrak{t}}

\newcommand{\CC}{\mathbf{C}}

\newcommand{\Mod}{\ensuremath{\mathsf{Mod}} }

\newcommand{\Pre}{\ensuremath{\mathsf{Pr}} }

\newcommand{\Cat}{\ensuremath{\mathsf{Cat}} }

\newcommand{\lra}{\longrightarrow}

\newcommand{\aaa}{\ensuremath{\mathcal{A}}}
\newcommand{\bbb}{\ensuremath{\mathcal{B}}}

\newcommand{\uuu}{\ensuremath{\mathcal{U}}}


\SelectTips{cm}{11}

\title{A Hochschild Cohomology Comparison Theorem for prestacks}
\author{Wendy Lowen} 
\address[Wendy Lowen]{Departement Wiskunde-Informatica, Middelheimcampus,
Middelheimlaan 1,
2020 Antwerp, Belgium}
\email{wendy.lowen@ua.ac.be}
\author{Michel Van den Bergh}
\address[Michel Van den Bergh]{Departement WNI, Hasselt University, Agoralaan, 3590 Diepenbeek, Belgium}
\email{michel.vandenbergh@uhasselt.be}
\thanks{The first author is postdoctoral fellow with the Fund of Scientific Research Flanders (FWO)}
\thanks{The second author is a director of research at the FWO}
 \keywords{Hochschild cohomology, fibered categories, Special Cohomology Comparison Theorem}
\subjclass{16E40,18D30}
\begin{document}
\maketitle

\begin{abstract}
  We generalize and clarify Gerstenhaber and Schack's ``Special
  Cohomology Comparison Theorem''.  More specifically we obtain a
  fully faithful functor between the derived categories of bimodules
  over a prestack over a small category $\mathcal{U}$ and the derived
  category of bimodules over its corresponding fibered category. In
  contrast to Gerstenhaber and Schack we do not have to assume that
  $\mathcal{U}$ is a poset.
\end{abstract}

\def\Bimod{\mathsf{Bimod}}
\section{Introduction}
Throughout $k$ is a commutative base ring.
In \cite{gerstenhaberschack, gerstenhaberschack1,
  gerstenhaberschack2} Gerstenhaber and Schack study deformation
theory and Hochschild cohomology of presheaves of algebras. For a
presheaf $\aaa$ of $k$-algebras on a small category $\uuu$, the
corresponding Hochschild cohomology is defined as\footnote{For applications in deformation
theory one would only use this definition if $\mathcal{A}$ is flat over
$k$. The ``correct'' definition is obtained by replacing $\mathcal{A}$ first by a
suitable $k$-flat resolution.}
\begin{equation}\label{ref-1.1-0}
\operatorname{HH}^n(\aaa) = \Ext^n_{\Bimod(\aaa)}(\aaa, \aaa)
\end{equation}
To $\aaa$ Gerstenhaber and Schack associate a single algebra $\mathcal{A}!$, and
a functor
\[
(-)!: \Bimod(\aaa) \rightarrow \Bimod( \mathcal{A}!),
\] 
which sends $\aaa$ to $\aaa!$ and preserves $\Ext$. 
It follows
in particular that $\operatorname{HH}^n(\aaa) \cong
\operatorname{HH}^n(\mathcal{A!})$.  As $(-)!$ does not preserve
injectives nor projectives the fact that it preserves $\Ext$ is not at
all a tautology

\medskip

In fact the construction of $\mathcal{A}!$ and the proof of preservation of
$\Ext$ are rather difficult and proceed in several steps. The first
step covers the case that $\uuu$ is a \emph{poset}. In that case
$\mathcal{A}!$ is simply 
$\prod_{V\in \uuu} \bigoplus_{V\leq U}
\aaa(V)$. This part of the
construction is the so-called \emph{Special Cohomology Comparison
  Theorem} (SCCT). It is stated and proved in
\cite{gerstenhaberschack2}.

To cover the general case Gerstenhaber and Schack doubly subdivide
$\uuu$ which transforms it into a poset.  The \emph{General Cohomology
  Comparison Theorem} (GCCT) states that this subdivision preserves
$\Ext$. The statement of this theorem is given in~\cite{gerstenhaberschack1}, but the proof has not been published.

In this paper we will be concerned with the SCCT. Our methods may also be
used to give new insight on the GCCT but this will be covered elsewhere. 

\medskip

We now discuss our main result (in a less general setting than in the
body of the paper for the purposes of exposition). To $\aaa$ we may
associate a ``$\uuu$-graded category'' (see \cite{lowen8} or
\S\ref{ref-2.2-3} below) 
which has an associated category of ``bimodules''
$\mathsf{Bimod}_{\uuu}(\AAA)$.  Our generalization of the SCCT is the
following
\begin{theorem} \label{ref-1.1-1} (A restricted version of Theorem \ref{ref-4.1-40})
There is a natural
functor
\[
\Pi^{\ast}: \Bimod(\aaa) \rightarrow \mathsf{Bimod}_{\uuu}(\AAA)
\]
which has the property $\Pi^\ast(\aaa)=\AAA$ and which induces a fully
faithful functor between the corresponding derived categories. 
\end{theorem}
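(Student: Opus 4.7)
The plan is to realize $\Pi^\ast$ as a restriction–of–scalars functor between module categories over small linear envelopes, and then to deduce fully faithfulness at the derived level from the standard adjunction. Specifically, restriction is exact, so $L\Pi^\ast = \Pi^\ast$; by the adjunction with its left adjoint $\Pi_!$, the derived functor $\Pi^\ast$ is fully faithful on derived categories if and only if the derived counit
\[
L\Pi_!\,\Pi^\ast M \lra M
\]
is an isomorphism for every $M \in \D(\Bimod(\aaa))$.

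The first step is to reinterpret both $\Bimod(\aaa)$ and $\Bimod_{\uuu}(\AAA)$ as ordinary module categories over small linear envelopes $\aaa^e$ and $\AAA^e$, and to exhibit a canonical linear functor $\Pi:\AAA^e \to \aaa^e$ whose associated restriction of scalars is $\Pi^\ast$. The prestack structure of $\aaa$ together with the construction of the associated $\uuu$–graded category $\AAA$ should determine $\Pi$ essentially uniquely. A direct inspection of the definitions then yields $\Pi^\ast(\aaa)=\AAA$, and the left adjoint $\Pi_!$ is given by tensoring with $\aaa^e$ regarded as an $\AAA^e$–module along $\Pi$.

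I would next reduce the computation of the derived counit to a set of compact generators of $\D(\Bimod(\aaa))$. The natural choice is the family of representable bimodules of the form $\aaa(-,U)\otimes_k \aaa(V,-)$ indexed by $(U,V)\in \uuu\times \uuu$. On such a generator $\Pi^\ast$ produces an explicit $\AAA$–bimodule carrying a canonical bar/simplicial resolution built from chains of composable morphisms in $\uuu$, and $L\Pi_!$ is then computed by applying $\Pi_!$ termwise.

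The technical heart of the argument — and the step I expect to be the principal obstacle — is verifying that the resulting complex collapses to the original representable bimodule in cohomological degree zero. This amounts to proving acyclicity of an explicit simplicial complex indexed by chains of arrows in $\uuu$, which can presumably be established by a contracting-homotopy argument. In the poset setting of Gerstenhaber and Schack this collapse is carried out by hand; what is new in the general case is the presence of non-identity morphisms and automorphisms in $\uuu$, and it is precisely the fact that $\AAA$ is a graded \emph{category} (rather than a single algebra as in the original SCCT) which should make the bookkeeping of these arrows sufficiently functorial for the simplicial cancellations still to go through.
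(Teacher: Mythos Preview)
Your overall strategy --- reduce to compact generators, resolve, and check acyclicity by a contracting homotopy --- is close in spirit to the paper's. But there is a real error in your identification of the generators. You write them as $\aaa(-,U)\otimes_k\aaa(V,-)$ indexed by $(U,V)\in\uuu\times\uuu$; this treats $\aaa$ as a linear category with object set $\Ob(\uuu)$, which is already the associated $\uuu$-graded category $\AAA$ (or its underlying linear category $\tilde\AAA$), not the prestack. In the paper $\Bimod(\aaa)=\Mod(\aaa^{\op}\otimes\aaa)$ where $\aaa^{\op}\otimes\aaa$ is a prestack on $\uuu$ (not on $\uuu\times\uuu$), and its compact projective generators $P^{\mathrm{fib}}_{B,W,A}$ are indexed by a \emph{single} $W\in\uuu$ together with $A,B$ in the fibre $\aaa(W)$. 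This is not cosmetic: the whole content of the theorem is the difference between bimodules over the prestack (a diagonal index $W$) and bimodules over the graded category (an arrow $u:V\to U$ as index), so if you take generators indexed by $\uuu\times\uuu$ you are already on the wrong side of $\Pi^\ast$, and the bar complex you write down will not have the shape needed for the homotopy to close up.

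With the correct generators in hand, the paper's organization also differs from your direct counit computation. Rather than applying $L\Pi_!$ to a bar resolution of $\Pi^\ast P^{\mathrm{fib}}_{B,W,A}$, it introduces for each $(B,W,A)$ a functor $\Phi^\ast_{A,B}$ from $\Bimod_\uuu(\AAA)$ to natural systems (in the sense of Baues--Wirsching) on the slice $\uuu/W$, and proves as its key lemma that $L\Phi_{A,B,!}(\underline{k})=I\,P^{\mathrm{fib}}_{B,W,A}$; the contracting homotopy you anticipate is carried out on $\Phi_{A,B,!}$ applied to the standard bar resolution of $\underline{k}$. Adjunction then converts $\RHom_{\Bimod_\uuu(\AAA)}(I P^{\mathrm{fib}}_{B,W,A},IN)$ into natural-system cohomology of a presheaf on $\uuu/W$, which by Baues--Wirsching coincides with presheaf cohomology, which collapses because $\uuu/W$ has the final object $1_W$. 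This detour through $\Nat(\uuu/W)$ and $\Pre(\uuu/W)$ is the paper's main organizing device: it localizes the hard homotopy to a statement about the single object $\underline{k}$, and buys the final-object collapse for free at the end.
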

In case $\uuu$ is a poset this theorem quickly yields the SCCT in the
version of Gerstenhaber and Schack (see \S \ref{ref-5-42}). 
Despite the fact that our result is more general our proof seems
more direct than the one by Gerstenhaber and Schack as we are able to leverage some basic properties
of natural systems \cite{baueswirsching}.

Our main application of Theorem \ref{ref-1.1-1} is that combined with
\cite[Thm 3.27]{lowen8} it implies that the Hochschild cohomology of
$\mathcal{A}$ controls the deformation theory of $\mathcal{A}$ as a
$k$-linear prestack\footnote{$k$-linear prestacks over $\uuu$ were
  called ``pseudofunctors'' in \cite{lowen8}.} (but not as a presheaf
of $k$-algebras!). If fact from Theorem~\ref{ref-4.1-40} below it
follows that a similar result is true if $\mathcal{A}$ is itself a
$k$-linear prestack.

In an appendix we describe the essential image of $\Pi^\ast$ as the
objects inverting certain maps between projectives.  Theorem
\ref{ref-1.1-1} may be translated into saying that $\Pi^\ast$ is
obtained from a certain \emph{stably flat} universal localization
\cite{NR} of linear categories. In particular the K-theoretic results
of \cite{neeman2,NR} apply.

Finally we mention that Theorem \ref{ref-1.1-1}
(or rather its generalization  Theorem \ref{ref-4.1-40}) is a key ingredient in \cite{lowenvandenbergh3}.

\section{Prestacks and graded categories}
In this section we quickly recall the relation between fibered categories
and prestacks in the $k$-linear setting. For full details we refer to
\cite{lowen8,vistoli2}.
\subsection{The classical formalism}
We first recall the classical theory when there is no additive
structure.  Let $\mathcal{U}$, $\AAA$, $\phi$ be respectively a small category, an
arbitrary category and
a functor $\phi:\AAA\rightarrow \uuu$. For
$A,B\in \AAA$ and for $f:\phi(A)\rightarrow \phi(B)$ in $\mathcal{U}$ put
$\AAA_f(A,B)=\phi^{-1}(f)$.  For $U\in \Ob(\uuu)$, we define a
subcategory $\AAA_U$ of $\AAA$ as follows: 
$\Ob(\AAA_U)=\phi^{-1}(U)$ and for $A,B\in \phi^{-1}(U)$ we put
$\AAA_U(A,B)=\AAA_{1_U}(A,B)$.

Instead of specifying the functor $\phi$ we may just as well specify 
the (possibly big) sets $\Ob(\AAA_U)$ and  for $A\in \Ob(\AAA_U)$, $B\in \Ob(\AAA_V)$ the decompositions
\[
\frak{a}(A,B)=\coprod_{f:U\rightarrow V} \frak{a}_f(A,B)
\]
This is what we will do in the sequel. 

\medskip

If $f:U\rightarrow V$ then an arrow $\delta\in \frak{a}_{f}(A,B)$,
$A\in \frak{a}_U$, $B\in \frak{a}_V$ is called \emph{cartesian} if
left composing with $\delta$ defines an isomorphism $\frak{a}_{g}(C,A)
\rightarrow \frak{a}_{fg}(C,B)$ for all $W\in \uuu$, $C\in
\frak{a}_w$, $g:W\rightarrow U$. Given $f,B$ a cartesian arrow is
necessarily unique up to unique isomorphism.

We say that $\frak{a}$ is \emph{fibered} if for any $f:U\rightarrow V$
in $\uuu$ and any $B\in \frak{a}_V$ there exists a cartesian arrow
$\delta_{f,B}:A\rightarrow B$.

Given a fibered $\uuu$-category, the choice of cartesian morphisms
$\delta_{f,B}\in \AAA_f(A,B)$ for every $f,B$ is called a
\emph{cleavage}.  We will always choose a \emph{normalized cleavage}, i.e.\
one in which $\delta_{1_V,B}=\operatorname{Id}_B$. If we have chosen a
cleavage then the domain of $\delta_{f,B}$ is denoted by $f^\ast B$.

In this way for every $B$ we obtain a functor
$f^\ast:\frak{a}_V\rightarrow \frak{a}_U$. For compositions $W\xrightarrow{g}
U\xrightarrow{f} V$ we obtain natural isomorphisms $(fg)^\ast\cong
g^\ast f^\ast$ which satisfy the usual compatibility for triple
compositions. In other words $U\mapsto \frak{a}_U$ defines a
pseudo-functor $\AAA:\uuu\rightarrow \operatorname{Cat}$. This pseudo-functor
satisfies $\AAA(1_V)=\operatorname{Id}_{\AAA_V}$. We will call such
a pseudo-functor \emph{normalized}. 

We now have functors of 2-categories
\begin{multline}
\label{ref-2.1-2}
\{\text{fibered $\uuu$-categories}\}\leftarrow \{\text{fibered $\uuu$-categories with a normalized cleavage} \}\\
\rightarrow \{\text{normalized pseudo-functors
$\uuu\rightarrow \operatorname{Cat}$}\}
\end{multline} 
where the first one is the forgetful functor and the second one is the
construction outlined in the above paragraphs. The above discussion
shows that the first functor is an equivalence and one easily verifies
that the second one is an isomorphism. The inverse functor associates
to a normalized pseudo-functor $\aaa:\uuu\rightarrow
\operatorname{Cat}$ the fibered category $\frak{a}$ such that
$\frak{a}_U=\aaa(U)$ and 
\[
\frak{a}_f(A,B)=\aaa(U)(A,f^\ast B)
\]
for $f:U\rightarrow V$, $A\in \AAA_U$, $B\in \AAA_V$. A normalized
cleavage is given by defining $\delta_{f,B}$ as the identity map in
$\frak{a}_f(f^\ast B,B)=\aaa(U)(f^\ast B,f^\ast B)$.

\subsection{Additive structure}
\label{ref-2.2-3}

\let\cal\mathcal

Let $\frak{a}\rightarrow \mathcal{U}$ be as above. Following
\cite{lowen8} we say that $\frak{a}$ is ($k$-linear)
\emph{$\mathcal{U}$-graded} if the sets $\frak{a}_f(A,B)$ are equipped
with the structure of a $k$-module such that the compositions
$\frak{a}_g(B,C)\times\frak{a}_f(A,B)\rightarrow \frak{a}_{gf}(A,C)$
are $k$-bilinear. If $\mathfrak{a}$ is fibered and we run this
additional structure through \eqref{ref-2.1-2} we find that
$\frak{a}$ now corresponds to a normalized pseudo-functor
$\cal{U}\rightarrow \Cat(k)$ (where $\Cat(k)$ stands for the $2$-category
of $k$-linear categories) and that this correspondence is reversible.

\medskip 

Below we will call a normalized pseudo-functor $\cal{U}\rightarrow
\Cat(k)$ a \emph{$k$-linear prestack} on $\cal{U}$. 
\begin{remark} If we equip $\cal{U}$ with the
  trivial Grothendieck topology then this use of the word ``prestack''
  is consistent with standard terminology as the usual gluing
  condition on maps is empty. Since the same is true for
 objects we might even have talked about
  stacks instead of prestacks.
\end{remark}

\section{Module and bimodule categories}\label{ref-3-4}
In this section we introduce a number of (bi)module categories and
relate them. The only non-formal result is Lemma \ref{ref-3.9-26}. 
\subsection{Modules over a $k$-linear prestack}
Recall that if $\frak{l}$ is a $k$-linear category then a (right)
$\frak{l}$-module is by definition a $k$-linear contravariant functor
$\frak{l}\rightarrow \Mod(k)$. This may be generalized to $k$-linear prestacks. 
To do this consider the constant presheaf $\underline{\Mod}(k)$ as a
prestack on $\uuu$.
\begin{definition}
  Let $\aaa$ be a $k$-linear prestack on $\uuu$. An \emph{$\aaa$-module} is
  a morphism of prestacks $M: \aaa^{\op} \lra
  \underline{\Mod}(k)$. More concretely, an $\aaa$-module consists of
  the following data:
\begin{itemize}
\item for every $U \in \uuu$, a $\aaa(U)$-module $M^U$;
\item for every $u: V \lra U$, a morphism 
of $\aaa(U)$-modules $\rho^u: M^U \lra M^Vu^\ast$;
\end{itemize}
such that the following additional compatibilities hold:
\begin{itemize}
\item for every $u: V \lra U$, $v: W \lra V$, $\rho^{uv}$ equals the canonical composition
$$M^U \lra M^Vu^\ast \lra M^W v^\ast u^\ast \lra  M^W (uv)^\ast;$$
\item for every $U \in \uuu$, $\rho^U: M^U \lra M^U1^{\ast}=M^U$ is the identity.
\end{itemize}
A \emph{morphism} of $\aaa$-modules $\varphi: M \lra N$ consists of morphisms
$$\varphi^U: M^U \lra N^U$$
commuting with the $\rho^u$. 
\end{definition}
Modules over a $k$-linear prestack $\aaa$ and their morphisms constitute an
abelian category ${\Mod}(\aaa)$.

\subsection{Bimodules over $k$-linear prestacks}
Let $\mathcal{A}$, $\mathcal{B}$ be two $k$-linear prestacks. By
definition an $\mathcal{A}$-$\mathcal{B}$-bimodule is a module over
$\mathcal{A}^{\operatorname{op}}\otimes\mathcal{B}$. More concretely a
$\mathcal{A}$-$\mathcal{B}$-bimodule $M$ is given by $k$-modules 
\[
M^U(B,A)
\] for
$U\in \Ob(\mathcal{U})$, $A\in \Ob(\mathcal{A}_U)$, $B\in \Ob(\mathcal{B}_U)$
which vary covariantly in $A$ and contravariantly in $B$ together with
compatible restriction morphisms
\[
\rho^u(B,A):M^U(B,A)\rightarrow M^V(u^\ast B, u^\ast A)
\]
for $u:V\rightarrow U$ in $\mathcal{U}$.  

The abelian category of
$\mathcal{A}$-$\mathcal{B}$-bimodules is denoted by
$\Bimod(\mathcal{A},\mathcal{B})$.

\subsection{Bimodules over a graded category}\label{ref-3.3-5}

$\mathcal{U}$-graded categories are mild generalizations of linear
categories. In particular they admit a natural notion of bimodule.

\begin{definition} \cite[Definition 2.9]{lowen8} Let $\AAA$ and $\BBB$
  be $\uuu$-graded categories. An $\AAA$-$\BBB$-\emph{bimodule} $M$
  consists of $k$-modules $$M_u(B,A)$$ for $u: V \lra U$, $B \in
  \Ob(\BBB_V)$, $A \in \Ob(\AAA_U)$ and ``multiplication'' morphisms
\begin{equation}
\label{ref-3.1-6}
\rho: \AAA_w(A,A') \otimes M_u(B,A) \otimes \BBB_v(B',B) \lra M_{wuv}(B',A')
\end{equation}
satisfying the natural associativity and identity axioms. 
\end{definition}
The abelian category of $\AAA$-$\BBB$-bimodules is denoted by
$\Bimod_{\mathcal{U}}(\AAA,\BBB)$.

\subsection{Functors between bimodule categories}\label{ref-3.4-7}
Let $\aaa$ and $\bbb$ be $k$-linear prestacks on $\mathcal{U}$ with associated
fibered graded categories $\AAA$ and $\BBB$ (see \S\ref{ref-2.2-3}).

There is a natural functor
\[
\Pi^{\ast}: {\Bimod}(\aaa, \bbb) \lra \mathsf{Bimod}_{\uuu}(\AAA,\BBB)
\]
defined by
\begin{equation}
\label{ref-3.2-8}
(\Pi^{\ast}M)_{u}(B, A) = M^V(B, u^{\ast}A)
\end{equation}
for $u:V\rightarrow U$, $A\in \Ob(\mathcal{A}(U))=\Ob(\mathfrak{a}_U)$, $B\in
\Ob(\mathcal{B}(V))=\Ob(\mathfrak{b}_V)$. 

\subsection{Fibered bimodules}
In this section we identify the essential image of $\Pi^\ast$.
\begin{definition}
\label{ref-3.3-9}
  Let $\AAA$ and $\BBB$ be fibered $\uuu$-graded categories. An
  $\AAA$-$\BBB$-bimodule $M$ is called \emph{fibered} if for one (and
  hence for every) cartesian morphism $\delta \in \AAA_u(A,A')$ with
  $u:V\rightarrow U$, $A\in \AAA_V$, $A'\in \AAA_U$ we have for every
 $v: W \lra V$, $B \in \BBB_W$ that the map
\[
 M_v(B,A) \lra M_{uv}(B,A')
\]
given by left multiplication (cfr \eqref{ref-3.1-6}) with $\delta$ is an
isomorphism.
\end{definition}

Let $$\mathsf{Bimod}^{\mathrm{fib}}_{\uuu}(\AAA,\BBB) \subseteq \mathsf{Bimod}_{\uuu}(\AAA,\BBB)$$ denote the full subcategory of fibered bimodules.

\begin{proposition} \label{ref-3.4-10}
  Assume that $\AAA$ and $\BBB$ are obtained from $k$-linear prestacks
  $\aaa$ and $\bbb$. Then  the functor $\Pi^{\ast}$ induces an equivalence
  of categories
\[
{\Bimod}(\aaa , \bbb) \lra \mathsf{Bimod}^{\mathrm{fib}}_{\uuu}(\AAA,\BBB).
\]
In particular $\Pi^\ast$ is fully faithful.
\end{proposition}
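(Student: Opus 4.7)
The plan is to construct an explicit quasi-inverse $\Pi_\ast : \mathsf{Bimod}^{\mathrm{fib}}_\uuu(\AAA,\BBB) \to \Bimod(\aaa,\bbb)$ and verify that the two composites are naturally isomorphic to the identity; full faithfulness of $\Pi^\ast$ will then follow at once since its essential image is a full subcategory. First I would verify that $\Pi^\ast M$ is indeed fibered for every $M \in \Bimod(\aaa,\bbb)$. Taking the cartesian lift $\delta_{u,A'} \in \AAA_u(u^\ast A', A')$ from the normalized cleavage, left multiplication by $\delta_{u,A'}$ gives
$$(\Pi^\ast M)_v(B, u^\ast A') = M^W(B, v^\ast u^\ast A') \longrightarrow M^W(B, (uv)^\ast A') = (\Pi^\ast M)_{uv}(B, A'),$$
and unwinding the $\uuu$-graded bimodule structure on $\Pi^\ast M$ inherited from \eqref{ref-3.2-8} shows this map is induced by the coherence isomorphism $v^\ast u^\ast \cong (uv)^\ast$ of the pseudo-functor $\aaa$, hence invertible.

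Next, for a fibered $N$ I would set $(\Pi_\ast N)^V(B, A) := N_{1_V}(B, A)$, with the $\aaa(V)$-$\bbb(V)$-bimodule structure obtained by restricting the graded action \eqref{ref-3.1-6} to $1_V$. For $u: V \to U$ the restriction $\rho^u$ is defined by inverting the right-hand arrow in the zigzag
$$N_{1_U}(B, A) \xrightarrow{\;-\,\cdot\,\delta_{u,B}\;} N_u(u^\ast B, A) \xleftarrow[\sim]{\;\delta_{u,A}\,\cdot\,-\;} N_{1_V}(u^\ast B, u^\ast A),$$
the right arrow being invertible precisely by the fibered hypothesis (Definition \ref{ref-3.3-9}) applied to $\delta_{u,A}$. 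One then checks the pseudo-functor compatibilities: $\rho^{1_V} = \mathrm{id}$ follows from normalization of the cleavage, while the cocycle condition equating $\rho^{uv}$ with the canonical composite $\rho^v u^\ast \circ \rho^u$ reduces, via associativity of \eqref{ref-3.1-6}, to the fact that $\delta_{u,A}\cdot \delta_{v, u^\ast A}$ is a cartesian lift of $uv$ at $A$ and hence coincides with $\delta_{uv, A}$ up to the coherence iso $(uv)^\ast A \cong v^\ast u^\ast A$ of $\aaa$ (and symmetrically on the $\bbb$-side for $B$).

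Finally I would exhibit the unit and counit natural isomorphisms. For $\Pi_\ast \Pi^\ast M \cong M$ the components agree on the nose since $(\Pi^\ast M)_{1_V}(B, A) = M^V(B, 1_V^\ast A) = M^V(B, A)$ by normalization, and a direct computation matches the restrictions. For $\Pi^\ast \Pi_\ast N \cong N$, left multiplication by $\delta_{u,A}$ yields the component isomorphism $(\Pi^\ast \Pi_\ast N)_u(B, A) = N_{1_V}(B, u^\ast A) \to N_u(B, A)$, invertible by the fibered hypothesis; naturality in the graded bimodule structure is routine from \eqref{ref-3.1-6}. The main obstacle is the coherence check in the middle step: verifying the cocycle condition for $\rho^u$ amounts to carefully relating two successive cartesian lifts to the pseudo-functor coherence iso against the associativity of the $\uuu$-graded action---conceptually clear but notationally demanding.
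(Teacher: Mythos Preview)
Your proposal is correct and follows essentially the same route as the paper: show $\Pi^\ast$ lands in fibered bimodules via the coherence isomorphism $v^\ast u^\ast\cong (uv)^\ast$, define $\Pi_\ast$ by evaluation at identity arrows with restriction given by right multiplication by $\delta_{u,B}$ composed with the inverse of left multiplication by $\delta_{u,A}$, and check the two composites are naturally isomorphic to the identity. The paper abbreviates your zigzag by writing $M_{1_V}(u^\ast B,u^\ast A)\overset{\text{fib.}}{=}M_u(u^\ast B,A)$ and dismisses the coherence verifications you flag with ``It is easy to see''; your account is simply a more explicit version of the same argument.
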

\begin{proof}
  Let $N\in {\Bimod}(\aaa , \bbb)$. We first show that $\Pi^\ast N$ is a
  fibered bimodule. With the notation of Definition \ref{ref-3.3-9} we have
to show that the composition map
\[
(\Pi^\ast N)_v(B,A) \lra (\Pi^\ast N)_{uv}(B,A')
\]
is an isomorphism. We may choose $A=u^\ast A'$ and $\delta=\delta_{u,A'}$. 

By \eqref{ref-3.2-8} we have 
\begin{align*}
(\Pi^\ast N)_v(B,A)&=N^W(B,v^\ast
A)=N^W(B,v^\ast u^\ast A')\\ 
(\Pi^\ast N)_{uv}(B,A')&=N^W(B,(uv)^\ast A')
\end{align*}
Making explicit the various definitions we see that the composition map is
derived from the isomorphism $v^\ast u^\ast =(uv)^\ast$ and hence is itself
an isomorphism. Thus $\Pi^\ast N$ is fibered.

\medskip

Conversely assume that $M$ is a fibered $\AAA$-$\BBB$ bimodule. Then
we may define an $\mathcal{A}$-$\mathcal{B}$ bimodule $\Pi_\ast M$ via
\[
(\Pi_\ast M)^U(B,A)=M_{1_U}(B, A)
\]
for $A\in \Ob(\mathcal{A}(U))=\Ob(\mathfrak{a}_U)$, $B\in
\Ob(\mathcal{B}(U))=\Ob(\mathfrak{b}_U)$ where the restriction maps
\begin{multline*}
M_{1_U}(B,A)\overset{\text{def}}{=}(\Pi_\ast M)^U(B,A)\\
\xrightarrow{\rho^u} (\Pi_\ast M)^V(u^\ast B,u^\ast A)\overset{\text{def}}{=}M_{1_V}(u^\ast B,u^\ast A)\overset{\text{fib.}}{=}M_u(u^\ast B,A)
\end{multline*}
for $u:V\rightarrow U$ are given by by right multiplication with
the cartesian arrow  $\delta_{u,B}\in \AAA_u(u^\ast B, B)$. 

\medskip

It is easy to see that $\Pi^\ast$ and $\Pi_\ast$ define quasi-inverse functors
between $\Bimod(\aaa,\bbb)$ and $\Bimod_{\cal{U}}^{\text{fib}}(\AAA,\BBB)$. 
\end{proof}
In the sequel we will work with the category
$\Bimod_{\cal{U}}^{\text{fib}}(\AAA,\BBB)$ instead of $\Bimod(\aaa,\bbb)$.
The previous proposition shows this is equivalent. We will denote the inclusion
$ \Bimod_{\cal{U}}^{\text{fib}}(\AAA,\BBB)\hookrightarrow  \Bimod_{\cal{U}}(\AAA,\BBB)$ by $I$. 
\subsection{Relation with presheaves and natural systems on $\mathcal{U}$}
\label{ref-3.6-11}
Let $\AAA$ and $\BBB$ be fibered $\mathcal{U}$-graded
categories equipped with a normalized cleavage. Assume that $M$ is a fibered $\AAA$-$\BBB$-bimodule. Pick
$W\in \mathcal{U}$, $A\in \AAA_W$, $B\in \BBB_W$. Then $M$ induces a
presheaf $\Psi^\ast_{A,B}(M)$ of $k$-modules over $\mathcal{U}/W$ as
follows: for $g:V\rightarrow W$ in $\mathcal{U}/W$ put
\begin{equation}
\label{ref-3.3-12}
\Psi^\ast_{A,B}(M)(g)=M_{1_V}(g^\ast B,g^\ast A)
\end{equation}
The restriction morphisms are obtained from the fact that $M$ is obtained from
a bimodule over the prestacks corresponding to $\AAA$, $\BBB$ (by Proposition
\ref{ref-3.4-10}). 

Concretely for $v:V'\rightarrow V$  the restriction
map
\begin{multline*}
M_{1_V}(g^\ast B,g^\ast A)\overset{\text{def}}{=} \Psi^\ast_{A,B}(M)(g)\\
\xrightarrow{\rho^v}
\Psi^\ast_{A,B}(M)(gv)\overset{\text{def}}{=}M_{1_{V'}}((gv)^\ast B,(gv)^\ast A)
\overset{\text{fib.}}{=} M_{v}(v^\ast g^\ast B,g^\ast A)
\end{multline*}
is given by right multiplying with the cartesian arrow $\delta_{v,g^\ast B}\in
\mathfrak{b}_v(v^\ast g^\ast B,g^\ast B)$. 

\medskip

We think of
$\Psi^\ast_{A,B}(M)$ as a $(A,W,B)$-local version of $M$.
For use below we present a generalization of this construction to the
case that $M$ is not necessarily fibered. 

\def\Nat{\mathsf{Nat}}

A \emph{natural system} of $k$-modules on $\uuu$ in the sense of
\cite{baueswirsching} is by definition a functor 

\begin{equation}
\label{ref-3.4-13}
\mathsf{Fact}(\uuu)
\lra \Mod(k)
\end{equation} where $\mathsf{Fact}(\uuu)$ is the category with as
objects the morphisms $u: V \lra U$ of $\uuu$, and morphisms from $u$
to $u'$ given by diagrams
\[
\xymatrix{ {V} \ar[r]^u & {U} \ar[d]^p \\ {V'} \ar[u]^q \ar[r]_{u'} & {U'.}}
\]
Natural systems on $\uuu$ constitute a category $\mathsf{Nat}(\uuu)$.

Sending an arrow to its domain defines a functor
\[
\mathsf{Fact}(\uuu)\lra \mathcal{U}^{\op}
\]
Hence from \eqref{ref-3.4-13} we obtain a corresponding functor
\[
I:\Pre(\mathcal{U})\lra \mathsf{Nat}(\uuu)
\]
where $\Pre(\mathcal{U})$ is the category of $k$-linear presheaves on
$\uuu$.  Concretely for a presheaf $F$ and an arrow $u:V\rightarrow U$ we have
\[
(I\!F)(u)=F(V)
\]
Clearly $I$ is fully faithful.

\medskip

We define a natural system $\Phi^\ast_{A,B}(M)$ on $\mathcal{U}/W$ via
\[
\Phi^\ast_{A,B}(M)(u)=M_u((fu)^\ast B, f^\ast A)
\]
where $V\xrightarrow{u} U\xrightarrow{f} W$ represents an object in
$\mathsf{Fact}(\mathcal{U}/W)$ 

Let us check that this has the right functoriality property.  Consider
the following morphism in $\mathsf{Fact}(\mathcal{U}/W)$:
\[
\xymatrix{ {V} \ar[r]^u & {U} \ar[rd]^f \ar[d]^p \\ {V'} \ar[u]^q \ar[r]_{u'} & {U'} \ar[r]_{f'} & {W.}}
\]
We have to produce a map
\[
M_u((fu)^\ast B, f^\ast A)  \overset{\text{def}}{=} \Phi^\ast_{A,B}(M)(u)
\lra \Phi^\ast_{A,B}(M)(u')\overset{\text{def}}{=}M_{u'}((f'u')^\ast B, f^{\prime \ast} A)
\]
We have
\begin{align*}
M_u((fu)^\ast B, f^\ast A)&\cong M_u( (fu)^\ast B, p^\ast f^{\prime \ast} A)\\
M_{u'}((f'u')^\ast B, f^{\prime\ast} A)&\cong M_{puq}(q^\ast (fu)^\ast  B, 
f^{\prime\ast} A)
\end{align*}
The required map is given by left multiplying with 
$\delta_{p,f^{\prime \ast}A}\in \mathfrak{a}_p(p^\ast f^{\prime \ast} A,f^{\prime \ast} A )$
and right multiplying with $\delta_{q,(uf)^{\ast}B}\in \mathfrak{b}_v(v^\ast
(uf)^{\ast}B ,(uf)^{\ast}B) $.
\begin{proposition} 
\label{ref-3.5-14} Let $\frak{a}$, $\frak{b}$ be fibered $\mathcal{U}$-graded categories and let $W\in \mathcal{U}$, $A\in \AAA_W$, $B\in \BBB_W$. 
The following diagram is commutative
\[
\xymatrix{
\Bimod^{\mathrm{fib}}(\AAA,\BBB)\ar[d]_{\Psi^\ast_{A,B}} \ar[r]^I &
\Bimod(\AAA,\BBB\ar[d]^{\Phi^\ast_{A,B}})\\
\Pre(\uuu/W) \ar[r]_I & \Nat(\uuu/W)
}
\]
\end{proposition}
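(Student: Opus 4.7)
The plan is to exhibit a canonical natural isomorphism $\eta: I \circ \Psi^\ast_{A,B} \Rightarrow \Phi^\ast_{A,B} \circ I$; the diagram cannot commute on the nose since, already on objects, the two composites evaluated on a fibered bimodule $M$ and an object $u: V \to U$ of $\mathsf{Fact}(\uuu/W)$ (sitting over $f: U \to W$, with $g := fu$) are
\[
M_{1_V}((fu)^\ast B, (fu)^\ast A) \quad\text{and}\quad M_u((fu)^\ast B, f^\ast A),
\]
and these are related only via the fibered condition on $M$.

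I would define $\eta_M(u)$ as the composite of the cleavage coherence isomorphism $(fu)^\ast A \cong u^\ast f^\ast A$ in $\AAA_V$ with left multiplication by the cartesian arrow $\delta_{u, f^\ast A} \in \AAA_u(u^\ast f^\ast A, f^\ast A)$; the latter is an isomorphism precisely because $M$ is fibered (Definition \ref{ref-3.3-9}). Naturality in $u$ is the key point. For a morphism $(p,q): u \to u'$ in $\mathsf{Fact}(\uuu/W)$ (so $f'p = f$, $gq = g'$, and $u' = puq$), unwinding \S\ref{ref-3.6-11} shows that the right-hand vertical of the naturality square is left multiplication by $\delta_{p, f'^\ast A}$ together with right multiplication by $\delta_{q, (fu)^\ast B}$, while the left-hand (presheaf) vertical, once its target is rewritten via the fibered iso, is right multiplication by $\delta_{q, (fu)^\ast B}$ alone. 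Bimodule associativity decouples the $\BBB$-side (which matches identically on the two paths) from the $\AAA$-side, and reduces the square to the cartesian-arrow identity
\[
\delta_{u', f'^\ast A} \;=\; \delta_{p, f'^\ast A} \circ \delta_{u, f^\ast A} \circ \delta_{q, (fu)^\ast A},
\]
read modulo the nested coherence isos coming from $f'p = f$ and $u' = puq$. Both sides are cartesian morphisms over $u' = puq$ ending at $f'^\ast A$ (using the standard fact that composites of cartesian arrows are cartesian), so they agree by the uniqueness-up-to-unique-iso of cartesian lifts.

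The main obstacle is the bookkeeping of the iterated coherence isomorphisms $(u')^\ast f'^\ast A \cong q^\ast u^\ast p^\ast f'^\ast A \cong q^\ast (fu)^\ast A$, and the analogous ones on the $\BBB$-side, so that the two compositions of cartesian arrows arising in the naturality square are \emph{manifestly} equal and not merely equal up to some implicit identification. No genuinely new input beyond cartesian-arrow uniqueness and bimodule associativity is required.
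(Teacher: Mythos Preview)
Your proposal is correct and is precisely the unfolding of the paper's one-word proof ``Easy.'' The authors evidently intend commutativity up to the canonical natural isomorphism you describe, and your construction of $\eta$ via left multiplication by $\delta_{u,f^\ast A}$ together with the coherence iso $(fu)^\ast A \cong u^\ast f^\ast A$, followed by the naturality check reducing to the cartesian identity $\delta_{u',f'^\ast A}=\delta_{p,f'^\ast A}\circ\delta_{u,f^\ast A}\circ\delta_{q,(fu)^\ast A}$ (modulo coherences) via uniqueness of cartesian lifts, is exactly the routine verification they suppress.
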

\begin{proof} Easy.
\end{proof}

\begin{remark}\label{remnatbimod}
Natural systems are a special case of bimodules over a fibered $\uuu$-graded category. More precisely, putting $\aaa = \bbb = \underline{k}$, the constant presheaf on $\uuu$, and $\mathfrak{k}$ the associated graded category, then natural systems on $\uuu$ are noting but $\mathfrak{k}$-$\mathfrak{k}$-bimodules.
\end{remark}

\subsection{Projective bimodules}
\label{ref-3.7-15}
Let $\AAA$, $\BBB$ be $\uuu$-graded categories.  
%
%
For
$u:V\rightarrow U$, $B\in \BBB_V$, $A\in \AAA_U$ the exact functor
\[
\Bimod_{\cal{U}}(\AAA,\BBB)\lra \Mod(k):M\mapsto M_u(B,A)
\]
is representable by a projective object which we denote
by $P_{B,u,A}$. Concretely for $u':V'\rightarrow U'$ in $\mathcal{U}$ and
$B\in \BBB_{V'}$, $A\in \AAA_{U'}$ we have
\begin{equation}
\label{ref-3.5-16}
(P_{B,u,A})_{u'}(B',A')=\bigoplus_{puq=u'} \BBB_q(B',B)\otimes \AAA_p(A,A')
\end{equation}
An element 
\[
b\otimes a \in \BBB_q(B',B)\otimes \AAA_p(A,A')
\]
may be interpreted in several equivalent ways.
\begin{itemize}
\item 
If $x\in M_{u}(B,A)$ then the corresponding map
\[
P_{B,u,A}\lra M
\]
sends $b\otimes a$ to $bxa$. 
\item 
If we view $b\otimes a$ as a map
\[
P_{B',u',A'}\lra P_{B,u,A}
\]
corresponding to $\operatorname{Id}_B\otimes \operatorname{Id}_A\in
(P_{B,u,A})_u(B,A)$ then the corresponding natural transformation from
the functor
\[
M\mapsto M_{u}(B,A)
\]
to the functor
\[
M\mapsto M_{u'}(B',A')
\]
is given by $m\mapsto amb$. 
\end{itemize}
From the second interpretation we obtain that if we have maps
\[
P_{B'',u'',A''}\xrightarrow{b'\otimes a'} P_{B',u',A'}\xrightarrow{b\otimes a}
P_{B,u',A}
\]
then the composition is given by $bb'\otimes a'a$.

For $u:V\rightarrow U$, $q:V'\rightarrow V$, $p:U\rightarrow U'$,
$B\in \AAA_V$,  $A\in \AAA_{U}$, $A'\in \AAA_{U'}$
we have canonical
maps
\begin{equation}
\label{ref-3.6-17}
\begin{aligned}
\delta^r_q:&P_{q^\ast B,uq,A}\lra P_{B,u,A}\\
\delta^l_p:&P_{B,pu,A'}\lra P_{B,u,p^\ast A'}
\end{aligned}
\end{equation}
which are respectively associated to
\begin{align*}
  \delta_{q,B}\otimes \operatorname{Id}_A&\in \BBB_q(q^\ast B,B)\otimes \AAA_{1_{U}}(A,A)\\
  \operatorname{Id}_B\otimes \delta_{p,A'}&\in
  \BBB_{1_{V}}(B,B)\otimes
  \AAA_{p}(p^\ast A',A')
\end{align*}
These canonical maps will play an important role below. 

\medskip

Now let $W\in \mathcal{U}$, $A\in \AAA_W$, $B\in \BBB_W$. 
The functors $\Phi^\ast_{A,B}$, $\Psi^\ast_{A,B}$ introduced in \S\ref{ref-3.6-11}
are functors between Grothendieck categories commuting with
products. Hence they have left adjoints which we denote respectively
by $\Phi_{A,B,!}$ and $\Psi_{A,B,!}$. Since these functors have exact
right adjoints they preserve projectives.
\begin{lemma}
\label{ref-3.6-18}
 For $V\xrightarrow{u} U\xrightarrow{f} W$ in $\mathcal{U}/W$ let $P_u$ be the projective natural
system on $\mathcal{U}/W$ given by
\[
P_u=k \mathsf{Fact}(\mathcal{U}/W)(u,-)
\]
\begin{enumerate}
\item
We have
\[
\Phi_{A,B,!}(P_u)=P_{(fu)^\ast B,u, f^\ast A}
\]
\item
The morphism $P_{u'}\rightarrow P_u$ corresponding to the morphism
$u\rightarrow u'$ in $\mathsf{Frak}(\mathcal{U}/W)$ given by 
\[
\xymatrix{ {V} \ar[r]^u & {U} \ar[rd]^f \ar[d]^p \\ {V'} \ar[u]^q \ar[r]_{u'} & {U'} \ar[r]_{f'} & {W.}}
\]
is sent under $\Phi_{A,B,!}$ to $\delta^r_q\delta_p^l=\delta_p^l \delta_q^r$. 
\end{enumerate}
\end{lemma}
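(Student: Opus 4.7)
The plan is to deduce (1) directly from the Yoneda lemma applied to the adjunction $\Phi_{A,B,!} \dashv \Phi^\ast_{A,B}$ recalled just before the statement, and then to obtain (2) by tracking generators through this same correspondence and matching the resulting map against the compositions $\delta^r_q \delta^l_p$ and $\delta^l_p \delta^r_q$ by means of the explicit dictionary for morphisms between projectives given in \S\ref{ref-3.7-15}.

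For (1), the natural system $P_u = k\mathsf{Fact}(\mathcal{U}/W)(u,-)$ represents evaluation at $u$ on $\Nat(\mathcal{U}/W)$, so for every $\AAA$-$\BBB$-bimodule $M$ the adjunction supplies natural bijections
\[
\Hom(\Phi_{A,B,!}(P_u), M) \cong \Hom(P_u, \Phi^\ast_{A,B}(M)) = \Phi^\ast_{A,B}(M)(u) = M_u((fu)^\ast B, f^\ast A),
\]
and the last expression is, by definition of the projectives of \S\ref{ref-3.7-15}, naturally isomorphic to $\Hom(P_{(fu)^\ast B, u, f^\ast A}, M)$. Yoneda then produces the asserted isomorphism.

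For (2), the morphism $P_{u'} \to P_u$ represents, on the side of natural systems, the map $\Phi^\ast_{A,B}(M)(u) \to \Phi^\ast_{A,B}(M)(u')$ induced by the arrow $(p,q): u \to u'$ in $\mathsf{Fact}(\mathcal{U}/W)$. By the explicit description of the functoriality of $\Phi^\ast_{A,B}$ obtained in \S\ref{ref-3.6-11}, this map is $m \mapsto \delta_{p, f^{\prime *}A} \cdot m \cdot \delta_{q, (fu)^\ast B}$. Using the second interpretation of a tensor $b \otimes a$ given in \S\ref{ref-3.7-15}, a natural transformation of this form corresponds via Yoneda and the decomposition \eqref{ref-3.5-16} to the element $\delta_{q, (fu)^\ast B} \otimes \delta_{p, f^{\prime *}A}$ in the summand indexed by the factorization $u' = puq$. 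On the other hand, the canonical maps $\delta^l_p$ and $\delta^r_q$ correspond respectively to $\operatorname{Id} \otimes \delta_{p, f^{\prime *}A}$ and $\delta_{q, (fu)^\ast B} \otimes \operatorname{Id}$, and the composition rule ``$bb' \otimes a'a$'' of \S\ref{ref-3.7-15}, applied in either order, returns exactly $\delta_{q, (fu)^\ast B} \otimes \delta_{p, f^{\prime *}A}$. This both identifies the image under $\Phi_{A,B,!}$ and simultaneously yields the equality $\delta^r_q \delta^l_p = \delta^l_p \delta^r_q$.

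The main obstacle I anticipate is purely bookkeeping: one must consistently apply the canonical coherence isomorphisms $(fu)^\ast \cong u^\ast f^\ast$ and $(f'p)^\ast \cong p^\ast f^{\prime *}$ coming from the pseudo-functor structure of $\AAA$ and $\BBB$, so that the factorizations of $u'$ appearing in $\delta^l_p$, $\delta^r_q$, and in the transport of $(p,q)$ all land in the same summand of \eqref{ref-3.5-16}. This is automatic from the normalized cleavages and the identity $\delta_{1,-} = \operatorname{Id}$.
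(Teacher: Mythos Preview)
Your proof is correct and follows essentially the same approach as the paper: the adjunction $\Phi_{A,B,!}\dashv\Phi^\ast_{A,B}$ combined with Yoneda gives (1), and the paper merely says that (2) ``is verified in a similar way.'' Your write-up of (2) is precisely the unpacking of that phrase, tracking the functoriality of $\Phi^\ast_{A,B}$ from \S\ref{ref-3.6-11} back through the dictionary of \S\ref{ref-3.7-15}, and your remark on the coherence identifications $(fu)^\ast\cong u^\ast f^\ast$ etc.\ is an appropriate caveat.
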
 
\begin{proof}
To verify the first claim we compute for $M\in \Bimod_{\cal{U}}(\AAA,\BBB)$
\begin{align*}
\Hom(\Phi_{A,B,!}(P_u),M) &=\Hom(P_u,
\Phi^\ast_{A,B}M)\\
&=(\Phi^\ast_{A,B}M)(u)\\
&=M_u((fu)^\ast B,  f^\ast A)\qed
\end{align*}
The second claim is verified in a similar way. 
\def\qed{}\end{proof}

\begin{remark} \label{ref-3.7-19} There is an alternative way to think about the projective
objects $P_{B,u,A}$. Introduce the $k$-linear category $\mathfrak{t} = \AAA^{\circ} \otimes_{\uuu} \BBB$ (see \cite{lowen8}, Definition 2.10) as 
follows 
\[
\Ob(\mathfrak{t})=\{(B,u,A)\mid u:V\rightarrow U\in \mathcal{U}, B\in \Ob(\BBB_V), A\in \Ob(\AAA_U)\}
\]
with 
\[
\TTT((B', u', A'), (B, u, A)) = \bigoplus_{u' = puq} \BBB_q(B',
B)\otimes \AAA_p(A,A') 
\]
with obvious composition. 

Then there is an isomorphism of categories 
\begin{equation}
\label{ref-3.7-20}
\Bimod_{\mathcal{U}}(\AAA,\BBB)\lra\Mod(\mathfrak{t}):
M\mapsto ((B, u, A)\mapsto M_u(B,A))
\end{equation}
Under this isomorphism we have
\[
P_{B,u,A}=\mathfrak{t}(-,(B, u, A))
\]
If $W\in \mathcal{U}$, $A,B\in \BBB_W$ then we have an associated functor
\[
\mathsf{Fact}(\mathcal{U}/W)\lra \mathfrak{t}^\circ:(V\xrightarrow{u} U\xrightarrow{f} W)\mapsto ((fu)^\ast B,u, f^\ast A)
\]
Hence we get a corresponding dual functor
\[
\Mod(\mathfrak{t})\lra \mathsf{Nat}(\mathcal{U}/W)
\]
Composing this with \eqref{ref-3.7-20} we obtain a functor
\[
\Bimod_{\mathcal{U}}(\AAA,\BBB)\lra \Nat(\mathcal{U}/W)
\]
which turns out to be precisely $\Phi^\ast_{A,B}$. From this one
easily obtains Lemma \ref{ref-3.6-18}. 
\end{remark}

\subsection{Projective fibered bimodules}
\label{ref-3.8-21}
Let $\aaa$, $\bbb$ be $k$-linear prestacks on $\uuu$ with associated fibered
categories $\AAA$, $\BBB$.
If $W\in \uuu$ and $A\in \mathfrak{a}_W$, $B\in
\mathfrak{b}_W$ the functor
\[
\Bimod^{\text{fib}}_{\cal{U}}(\AAA,\BBB)\lra \Mod(k):M\mapsto M_{1_W}(B,A)
\]
is representable by a projective object
$P^{\text{fib}}_{B,W,A}$. Again concretely for $u':V\lra U$, $B'\in
\BBB_V$, $A'\in \AAA_U$ we have
\[
\bigoplus_{u:V\rightarrow W}
\BBB_{1_V}(B',u^\ast B)\otimes \AAA_{u'}(u^\ast A,A')
\]

\begin{remark}\label{remprojfib}
Again, there is an alternative way to think about the projective objects $P^{\text{fib}}_{B,W,A}$. Let $\RRR$ be the underlying linear category of the fibered graded category associated to $\aaa \otimes \bbb$. Concretely,
$$\Ob(\RRR) = \{ (B,W,A) \,\, |\,\, V \in \Ob(\uuu), B \in \Ob(\bbb(U)), A \in \Ob(\aaa(U))\}$$
and
$$\RRR((B',W',A'), (B,W,A)) = \bigoplus_{w: W' \lra W}\aaa(W)(w^{\ast}A, A') \otimes \bbb(W)(B', w^{\ast}B).$$
Then we have an isomorphism of categories
\[
\Bimod(\aaa,\bbb) \cong \Mod(\RRR): M \mapsto ((B,W,A) \mapsto M^W(B,A))
\]
and, by Proposition \ref{ref-3.4-10}, an equivalence of categories
\[
{\Bimod}(\aaa , \bbb) \lra \mathsf{Bimod}^{\mathrm{fib}}_{\uuu}(\AAA,\BBB).
\]
Then $P^{\text{fib}}_{B,W,A}$ is the image of $\RRR(-, (B,W,A))$ under these functors.
\end{remark}

\begin{remark}\label{remunderlin}
With $\TTT$ as in Remark \ref{ref-3.7-19} and $\RRR$ as in Remark \ref{remprojfib}, the functor
$$\Pi^{\ast}: \Bimod(\aaa, \bbb) \lra \Bimod_{\uuu}(\AAA, \BBB)$$ of \S \ref{ref-3.4-7} is induced by an underlying linear functor
$$\Pi: \TTT \lra \RRR: (B, u: V \lra U, A) \longmapsto (B,V, u^{\ast} A).$$
\end{remark}

\subsection{Cohomology of natural systems and the bar complex}
\label{ref-3.9-22}

In \cite{baueswirsching}, the cohomology of $\uuu$ with values in a
natural system $N$ 
 has been defined via a certain complex $\CC(\uuu,
N)$.  This complex computes in fact
\begin{equation}
\label{ref-3.8-23}
\RHom_{\Nat(\mathcal{U})}(\underline{k},N)
\end{equation}
where $\underline{k}$ is the constant natural system with value $k$
(see \cite[Thm.\ 4.4]{baueswirsching}).  If
$N$ is obtained from a presheaf then this reduces to ordinary presheaf
cohomology. I.e.
\begin{proposition}\cite[Proposition 8.5]{baueswirsching}
\label{ref-3.8-24}
Let $F\in \Pre(\mathcal{U})$. Then 
\[
\CC(\uuu, I\!F) \cong\RHom_{\Pre(\mathcal{U})}(\underline{k},F)
\]
\end{proposition}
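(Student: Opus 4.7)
The plan is to deduce the isomorphism from the adjunction induced by the domain functor \(d : \mathsf{Fact}(\uuu) \to \uuu\op\), \((u : V \to U) \mapsto V\). Under this identification, \(I\) coincides with the pullback functor \(d^\ast\); being a pullback, it is exact and admits a left adjoint \(L := d_!\) (left Kan extension). Full faithfulness of \(I\), noted in the text, translates into the counit \(LI \to \mathrm{id}_{\Pre(\uuu)}\) being an isomorphism; in particular \(L(I\underline{k}) \cong \underline{k}\).

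A quick Yoneda check gives \(L(P_u) = k\,\uuu(-, d(u))\): indeed, by the adjunction,
\[
\Hom_{\Pre(\uuu)}(L(P_u), F) \;=\; \Hom_{\Nat(\uuu)}(P_u, IF) \;=\; (IF)(u) \;=\; F(d(u)),
\]
and this identifies \(L(P_u)\) with the representable at \(d(u)\). Hence \(L\) sends the standard projective generators of \(\Nat(\uuu)\) to the standard (representable) projective generators of \(\Pre(\uuu)\).

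Next, recall that the complex \(\CC(\uuu, N)\) is by construction \(\Hom_{\Nat(\uuu)}(B_\bullet, N)\), where \(B_\bullet \to \underline{k}\) is the canonical bar projective resolution of the constant natural system in \(\Nat(\uuu)\) underlying \cite[Thm.\ 4.4]{baueswirsching}; concretely, \(B_n\) is a coproduct of terms \(P_{u_1 \cdots u_n}\) indexed by \(n\)-chains of composable morphisms in \(\uuu\). The next step I would carry out is to verify that \(L B_\bullet \to \underline{k}\) is the analogous standard bar projective resolution of \(\underline{k}\) in \(\Pre(\uuu)\); the individual summands are already matched by the previous paragraph, so the only remaining task is to compare the face operators. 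Granting this, the adjunction \(L \dashv I\) applied degreewise yields the natural chain of isomorphisms
\[
\CC(\uuu, IF) \;=\; \Hom_{\Nat(\uuu)}(B_\bullet, IF) \;\cong\; \Hom_{\Pre(\uuu)}(L B_\bullet, F) \;\cong\; \RHom_{\Pre(\uuu)}(\underline{k}, F),
\]
which is the asserted isomorphism.

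The main obstacle is the identification \(L B_\bullet \cong\) (bar resolution in \(\Pre(\uuu)\)). The face operators of the bar complex in \(\Nat(\uuu)\) are built from composition in \(\mathsf{Fact}(\uuu)\), which records factorizations \(u = p u' q\), whereas those of the bar complex in \(\Pre(\uuu)\) involve only composition in \(\uuu\). The comparison is a routine but careful combinatorial verification, reflecting the fact that passing to the domain along \(d\) forgets precisely the factorization data; after applying \(L\) the two differentials agree up to re-indexing of the chains.
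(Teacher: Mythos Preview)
The paper does not give its own proof of this proposition; it is simply quoted from \cite[Proposition~8.5]{baueswirsching}, so there is no argument in the paper to compare yours against.

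Your approach via the adjunction $L = d_! \dashv d^\ast = I$ is sound. The step you flag as the main obstacle --- that $LB_\bullet$ is a projective resolution of $\underline{k}$ in $\Pre(\uuu)$ --- is indeed routine: $L$ preserves projectives because its right adjoint $I$ is exact, and termwise $LB_n = \bigoplus_{v \in N_n} k\,\uuu(-, V_0)$; tracing the face maps of $B_\bullet$ through $L$ one finds that $\partial_0$ becomes composition with $v_0$ while the remaining $\partial_i$ become identity re-indexings, which is exactly the standard simplicial resolution of the constant presheaf. (Strictly speaking you do not even need this identification: any projective resolution of $\underline{k}$ suffices, and acyclicity can be checked objectwise by noting that $(LB_\bullet)(V)$ is the chain complex of the nerve of $V\backslash\uuu$, which has an initial object.)

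A more direct route, closer to what one finds in Baues--Wirsching, bypasses $L$ entirely: since $(IF)(u) = F(V)$ for $u : V \to U$, one has termwise $\CC^n(\uuu, IF) = \prod_{v \in N_n} F(V_0)$, and a straightforward inspection of the coface maps shows this coincides on the nose with the standard cochain complex computing $\RHom_{\Pre(\uuu)}(\underline{k}, F)$. Your argument is the adjoint-functor repackaging of this observation; it is more conceptual, while the direct approach avoids the bookkeeping about $L$ on differentials that you defer.
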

To compute \eqref{ref-3.8-23} one uses the bar resolution
$B(\underline{k})$ (see the proof of \cite[Thm.\ 4.4]{baueswirsching})
of $\underline{k}$. To define this
let $N = N(\uuu)$ denote the simplicial nerve of $\uuu$. For $v \in
N_n$ given by
\[
\xymatrix{{V_0} \ar[r]_{v_0} & {V_1} \ar[r]_{v_1}& {\dots} \ar[r] & {V_{n-1}} \ar[r]_{v_{n-1}} & {V_n,}}
\]
we put $|v| = v_{n-1}\dots v_1v_0$. For $V\in N_0$ we put $|V|=1_V$.  
Then we have 
\begin{equation}
\label{ref-3.9-25}
B(\underline{k})_n = \oplus_{v \in N_n}P_{|v|}
\end{equation}
where $P_u$ stands for the bimodule associated to the projective
natural system $k\mathsf{Fact}(\mathcal{U})(u,-)$. 

As a complex $B(\underline{k})_n$ is the chain complex of a simplicial
object in $\Nat(\mathcal{U})$ which we will
denote by $\mathcal{B}(\underline{k})$. The degeneracies (which we will not
use) are obtained from the identity maps
\[
\sigma_i^v:P_{|v|}\lra P_{|\sigma_i(v)|}=P_{|v|}
\]
The boundary maps $\partial_i$ are obtained from maps
\[
\partial_i^v: P_{|v|} \lra P_{|\partial_i(v)|}
\]
which we now define. For $i$ different from $0$ and $n$, $|\partial_i(v)| = |v|$, and the
map $\partial_i^v$ is the identity. For $i = 0$ and $i = n$,
$\partial_0^v$ and $\partial_n^v$ are obtained from the following
maps $\partial_i(v)\rightarrow v$ in $\mathsf{Fact}(\mathcal{U})$
(using the contravariant dependence of $P_u$ on $u$). 
\[
\xymatrix{{V_{n-1}} \ar[r]^{|\partial_0(v)|} & {V_n} \ar[d]^1 \\ {V_0} \ar[u]^{v_0} \ar[r]_{|v|} & {V_n}} \hspace{1cm} \text{and} \hspace{1cm} \xymatrix{{V_{0}} \ar[r]^{|\partial_n(v)|} & {V_{n-1}} \ar[d]^{v_{n-1}} \\ {V_0} \ar[u]^{1} \ar[r]_{|v|} & {V_n}}
\]

\begin{remark}
If we consider natural systems as a special instance of bimodules as in Remark \ref{remnatbimod}, then the complex $\CC(\uuu, N)$ is readily seen to coincide with the Hochschild complex defined in \cite{lowen8}.
\end{remark}

\subsection{Resolutions}
Now we return to our standard settings: $\AAA$, $\BBB$ are fibered
$\uuu$-graded categories equipped with a normalized cleavage, $W\in \mathcal{U}$
and $A\in \AAA_W$, $B\in \BBB_W$.  \emph{The following is our key
technical result.}
\begin{lemma}
\label{ref-3.9-26}
Consider the constant natural system $\underline{k}$ on $\uuu/W$. We have
$$L\Phi_{A,B, !}(\underline{k}) = \Phi_{A,B,!}(\underline{k}) =  P^{\mathrm{fib}}_{B,W,A}.$$
\end{lemma}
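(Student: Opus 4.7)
The plan is to compute $L\Phi_{A,B,!}(\underline{k})$ by applying $\Phi_{A,B,!}$ to the bar resolution $B(\underline{k}) \to \underline{k} \to 0$ of \S\ref{ref-3.9-22}. First I would observe that $\Phi^*_{A,B}$ is exact (being a direct evaluation of bimodules), so $\Phi_{A,B,!}$ preserves projectives and $L\Phi_{A,B,!}(\underline{k})$ is represented by the complex $\Phi_{A,B,!}(B(\underline{k}))$. By Lemma \ref{ref-3.6-18} the degree-$n$ term is $\bigoplus_{v \in N_n(\uuu/W)} P_{(f_n|v|)^*B,\, |v|,\, f_n^*A}$, where $v = (V_0 \to \cdots \to V_n)$ has top structure map $f_n : V_n \to W$. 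It suffices to show this is a resolution of $P^{\mathrm{fib}}_{B,W,A}$ concentrated in degree zero.

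For the identification of $H^0$, right exactness of $\Phi_{A,B,!}$ gives $H^0 = \Phi_{A,B,!}(\underline{k})$. I would construct a comparison $\Phi_{A,B,!}(\underline{k}) \to P^{\mathrm{fib}}_{B,W,A}$ as the adjoint of a morphism $\eta : \underline{k} \to \Phi^*_{A,B}(P^{\mathrm{fib}}_{B,W,A})$, defined by sending the generator of $\underline{k}(u)$ for $u : V \to U$ in $\uuu/W$ with structure map $f : U \to W$ to the element $\mathrm{Id}_{(fu)^*B} \otimes \delta_{u, f^*A}$ in the $u'' = fu$ summand of $(P^{\mathrm{fib}}_{B,W,A})_u((fu)^*B, f^*A)$. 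Compatibility of $\eta$ across morphisms in $\mathsf{Fact}(\uuu/W)$ is routine from the normalisation and pseudofunctoriality of the cleavage, and the resulting comparison is seen to be an isomorphism by inspecting the cokernel presentation of $\underline{k}$ from $B(\underline{k})_1 \to B(\underline{k})_0$.

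For the vanishing of higher $H^i$ I would evaluate the complex pointwise at each $(B', u' : V' \to U', A') \in \TTT$ and exhibit a contracting homotopy. Using \eqref{ref-3.5-16}, the pointwise degree-$n$ term is
\[
\bigoplus_{\substack{v \in N_n(\uuu/W) \\ u' = p|v|q}} \BBB_q(B', (f_n|v|)^*B) \otimes \AAA_p(f_n^*A, A'),
\]
which is the chain complex of a simplicial $k$-module whose face maps combine the simplicial faces of $N_\bullet(\uuu/W)$ with the cartesian-arrow maps $\delta^r_q, \delta^l_p$ of Lemma \ref{ref-3.6-18}(2). I would then define an extra degeneracy $s_{-1}$ by \emph{prepending} $V'$: given data $(v, q, p, b, a)$, set $\tilde v = V' \xrightarrow{q} V_0 \to \cdots \to V_n \in N_{n+1}(\uuu/W)$ with $V'$ carrying the induced structure map $f_n|v|q : V' \to W$, new factorisation $(q', p') = (1_{V'}, p)$, $b$ transported via fiberedness to $\BBB_{1_{V'}}(B', (f_n|v|q)^*B)$, and $a$ unchanged. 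The simplicial identities $\partial_0 s_{-1} = \mathrm{id}$, $\partial_i s_{-1} = s_{-1} \partial_{i-1}$ for $i \ge 1$, and $\epsilon s_{-1} = \mathrm{id}$ on the augmentation then hold by direct calculation using the compatibility of cartesian arrows with pseudofunctorial composition.

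The main obstacle lies in this last step. A priori one would expect the natural extra degeneracy on $N_\bullet(\uuu/W)$ coming from the terminal object $1_W$ (appending $W$ at the end of each simplex) to give the contraction, but this degeneracy fails to lift to the bimodule complex because no morphism $W \to U'$ need exist. The rescue is that $V'$ (the source of $u'$) is always reachable via $q : V' \to V_0$, so a \emph{start}-extension $s_{-1}$ by $V'$ yields a well-defined pointwise contraction. Verifying the simplicial identities reduces to the compatibility $\delta_{v_0,-} \circ \delta_{q,-} = \delta_{v_0 q,-}$ and its cousins on both sides, which is immediate from fiberedness and the normalised cleavage.
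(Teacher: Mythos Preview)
Your proposal is correct and follows essentially the same approach as the paper: both apply $\Phi_{A,B,!}$ to the bar resolution of $\underline{k}$, evaluate the resulting complex pointwise at each $(B',u',A')$, and exhibit a contracting homotopy (your extra degeneracy $s_{-1}$, the paper's $h_n$) by prepending $V'\xrightarrow{q}V_0$ to each simplex and transporting $b$ along the inverse of the cartesian arrow $\delta_{q,(f|v|)^\ast B}$. Your diagnosis of why the terminal-object contraction on $N_\bullet(\uuu/W)$ fails to lift and why the start-extension by $V'$ succeeds is exactly the key point of the paper's argument.
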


\begin{proof}
  By definition, $L\Phi_{A,B, !}(\underline{k}) = \Phi_{A,B,
    !}(B(\underline{k}))$. We will define an augmentation map
  $\epsilon: \Phi_{A,B,!}(\mathcal{B}(\underline{k}))_0) \lra
  P^{\mathrm{fib}}_{B,W,A}$ and for every $u':V'\rightarrow U'$,
  $B'\in \BBB_{V'}$, $A'\in \AAA_{U'}$ we will show that the augmented
  simplicial abelian group
  $(\Phi_{A,B,!}(\mathcal{B}(\underline{k}))_{u'}(B',A'),\epsilon)$ is
  contractible. As the chain complex associated to
  $\Phi_{A,B,!}(\mathcal{B}(\underline{k}))$ is
  $\Phi_{A,B, !}(B(\underline{k}))$ this proves the claim.

\medskip

We need to give a detailed description 
of the simplicial object $\Phi_{A,B,!}(\mathcal{B}(\underline{k}))$.
Put
$N=N(\mathcal{U}/W)$. We write the elements of $N_n$ as $(v,f)$ where $v$
represents an element of $N(\mathcal{U})_n$
\[
\xymatrix{{V_0} \ar[r]_{v_0} & {V_1} \ar[r]_{v_1}& {\dots} \ar[r] & {V_{n-1}} \ar[r]_{v_{n-1}} & {V_n,}}
\]
and $f$ is map $V_{n} \rightarrow W$. Then
\[
\sigma_i(v,f)=(\sigma_{i}(v),f)
\]
and 
\[
\partial_i(v,f)=
\begin{cases}
(\partial_i v,f)& \text{if $i\neq n$}\\
(\partial_n v,v_{n-1}f)& \text{if $i=n$}\\
\end{cases}
\]
Combining \eqref{ref-3.9-25} with Lemma \ref{ref-3.6-18}(1) we obtain
\[
\Phi_{A,B,!}(\mathcal{B}(\underline{k}_n))=\bigoplus_{(v,f) \in
  N_n}P_{(f|v|)^\ast B, |v|, f^\ast A}
\]
The explicit description of $\mathcal{B}(\underline{k})$ given in \S\ref{ref-3.9-22} combined with Lemma \ref{ref-3.6-18}(2) allows us to describe the
degeneracies and boundary maps in $\Phi_{A,B,!}(\mathcal{B}(\underline{k}))$.

The degeneracies on $\Phi_{A,B,!}(\mathcal{B}(\underline{k}))$ (which
we do not use) are obtained from the identity maps
\[
\sigma_i^v:P_{(f|v|)^\ast B, |v|, f^\ast A} \lra P_{(f|\sigma_i(v)|)^\ast B,
|\sigma_i(v)|,f^\ast A}=P_{(f|v|)^\ast B,
|v|,f^\ast A}
\]
Likewise the boundary maps $\partial_i$ for $i\neq 0,n$ are obtained
from the identity maps
\[
\partial_i^v: P_{(f|v|)^\ast B, |v|, f^\ast A} \lra 
 P_{(f|\partial_i(v)|)^\ast B, |\partial_i(v)|, f^\ast A}
=P_{(f|v|)^\ast B, |v|, f^\ast A}
\]
$\partial_0$ is obtained from maps
\[
\partial_0^v: P_{(f|v|)^\ast B, |v|, f^\ast A} \lra P_{(f|\partial_0 v|)^\ast B, |\partial_0 v|, f^\ast A}.
\]
where with the notation of \eqref{ref-3.6-17} we have $\partial^v_0=\delta^r_{v_0}$. 

Similarly $\partial_n$ is obtained from maps
\[
\partial_n^v: P_{(f|v|)^\ast B, |v|, f^\ast A} \lra P_{(fv_{n-1}|\partial_n v|)^\ast B, |\partial_n v|, (fv_{n-1})^\ast A}.
\]
where with the notation of \eqref{ref-3.6-17} we have
$\partial^v_n=\delta^l_{v_{n-1}}$.

Now we discuss the case $n=1$. The boundary maps 
\[
\partial_0,\partial_1:\mathcal{B}(\underline{k})_1 \lra \mathcal{B}(\underline{k})_0
\]
are obtained from
\[
\partial^{u}_0: P_{(fu)^{\ast}B, u, f^{\ast}A} \lra P_{f^{\ast}B, 1_U, f^{\ast}A}
\]
\[
\partial^{u}_1: P_{(fu)^{\ast}B, u, f^{\ast}A} \lra P_{(fu)^{\ast}B, 1_V, (fu)^{\ast}A}
\]
These are respectively given by $\delta^r_u$ and $\delta^l_u$.

Let $M$ be fibered bimodule and fix an element $x\in M_{1_W}(B,A)$.
Then since $M$ corresponds to a bimodule over the corresponding
$k$-linear prestack on $\mathcal{U}$ we have corresponding restricted
elements $\rho^f(x)\in M_{1_U}(f^\ast B,f^\ast A)$ and
$\rho^{fu}(x)\in M_{1_V}((fu)^\ast B, (fu)^\ast A)$. 
One checks that the following identity holds in $M_u((fu)^\ast B,f^\ast A)$
\begin{equation}
\label{ref-3.10-27}
\rho^f(x)\cdot \delta_{u,f^\ast B}=\delta_{u,f^\ast A} \cdot \rho^{fu}(x)
\end{equation}
Let 
\[
\epsilon^f_x:P_{f^\ast A,1_U,f^\ast B}\lra M
\]
correspond to $\rho^f(x)\in M_{1_U}(B,A)$. Then \eqref{ref-3.10-27} combined
with the discussion in \S\ref{ref-3.7-15}
yields a commutative diagram
\begin{equation}
\label{ref-3.11-28}
\xymatrix{
& P_{f^\ast B,1_U,f^\ast A}\ar[rd]^{\epsilon^f_x}&\\
P_{(fu)^\ast B,u,f^\ast A}\ar[ru]^{\partial^u_0}\ar[rd]_{\partial^u_1}&& M\\
& P_{(fu)^\ast B,1_V,(fu)^\ast A}\ar[ru]_{\epsilon^{fu}_x}&
}
\end{equation}
We use this to define an augmentation
\[
\epsilon: \Phi_{A,B,!}(\cal{B}(\underline{k})_0) \lra P^{\mathrm{fib}}_{B,W,A}.
\]
For every $f: U \lra W$ in $N_0(\uuu/W)$, we have to define a map
\[
\epsilon^f: P_{f^{\ast}B, 1_U, f^{\ast}A} \lra P_{B,W,A}^{\mathrm{fib}},
\]
Let $M=P^{\text{fib}}_{A,W,B}$. Then 
\[
P^{\text{fib}}_{A,W,B}(A,1_W,B)=
\Bimod_{\mathcal{U}}(\AAA,\BBB)(P^{\text{fib}}_{B,W,A},P^{\text{fib}}_{A,W,B})
\]
contains a canonical element $x$ given by the identity morphism. 
We put $\epsilon^f=\epsilon^f_x$. Then by \eqref{ref-3.11-28} we obtain
\[
\epsilon^f\partial_0^u =\epsilon^{fu}\partial_1^u
\]
Now we show that for every $u':V'\rightarrow U'$ in $\mathcal{U}$ and
for every $B'\in \mathfrak{b}_{V'}$, $A'\in \mathfrak{a}_{U'}$ the
augmented simplicial object
$S\overset{\text{def}}{=}
(\Phi_{A,B,!}(\mathcal{B}(\underline{k}))_{u'}(B',A'),\epsilon)$
is left contractible in the sense of \cite[\S 8.4.6]{Weibel}. That is
we have to produce contracting homotopies 
\[
h_n:S_n\lra S_{n+1}
\]
 for $n\ge -1$ such that
\begin{equation}
\label{ref-3.12-29}
\begin{aligned}
\partial_0 h_n&=\operatorname{Id}_{S_n} \\
\partial_i h_n&=h_{n-1} \partial_{i-1} \qquad \text{($i>0$)}
\end{aligned}
\end{equation}
where $\partial_0:S_0\rightarrow S_{-1}$ should be interpreted as $\epsilon$. 

We start by defining   a map
\[
h_{-1}: S_{-1}=(P^{\mathrm{fib}}_{B,W,A})_{u'}(B',A') \lra
\bigoplus_{f:U\rightarrow W \in N_0}(P_{f^{\ast}B, 1_{U}, f^{\ast}A})_{u'}(B',A')
=S_0
\]
We have (see \S\ref{ref-3.8-21})
\[
(P^{\mathrm{fib}}_{B,W,A})_{u'}(B',A'):\bigoplus_{f:V'\rightarrow W}
\BBB_{1_{V'}}(B',f^\ast B)\otimes \AAA_{u'}(f^\ast A,A')
\]
and
\[
(P_{f^{\ast}B, 1_{U}, f^{\ast}A})_{u'}(B',A')=\bigoplus_{pq=u'}\BBB_q(B',f^\ast B)
\otimes \AAA_p(f^\ast A, A')
\]
We send the part of $(P^{\text{fib}}_{B,W,A})_{u'}(B',A')$ given by
\[
\BBB_{1_{V'}}(B',f^\ast B)\otimes \AAA_{u'}(f^\ast A,A')
\]
via  the identity
morphism to the identical part of $P_{f^{\ast}B, 1_{V'}, f^{\ast}A}$ 
corresponding to $q=1_{V'}$, $p=u'$.

Before continuing we will check
\begin{equation}
\label{ref-3.13-30}
\epsilon h_{-1}=\operatorname{Id}_{S_{-1}}
\end{equation}
To do this we need to make explicit $\epsilon:S_0\rightarrow S_{-1}$. One
verifies that $\epsilon$ sends an element
\[
b\otimes a \in \BBB_q(B',f^\ast B)
\otimes \AAA_p(f^\ast A, A')\subset (P_{f^{\ast}B, 1_{U}, f^{\ast}A})_{u'}(B',A')
\]
to the element of
\[
\BBB_{1_{V'}}(B',(fq)^\ast B)
\otimes \AAA_{u'}((fq)^\ast A, A') \subset (P^{\mathrm{fib}}_{B,W,A})_{u'}(B',A')
\]
 given by
\[
\delta^{-1}_{q,f^\ast B}\cdot b\otimes a\cdot \delta_{q,f^\ast A} 
\]
where we have committed a slight abuse of notation (cartesian arrows
are not invertible but left multiplying with them is).  So if $q=1_{V'}$
then $\epsilon(b\otimes a)=b\otimes a$. From this one deduces immediately
$\epsilon h_{-1}=\operatorname{Id}_{S_{-1}}$. 

For use below we also compute the other composition
$h_{-1}\epsilon$. Let $b\otimes a$ be as above. Then
\begin{equation}
\label{ref-3.14-31}
\begin{aligned}
  h_{-1}\epsilon(b\otimes a)&=h_{-1}(\delta^{-1}_{q,f^\ast B}\cdot
  b\otimes a\cdot \delta_{q,f^\ast A} )\\
&=\delta^{-1}_{q,f^\ast B}\cdot
  b\otimes a\cdot \delta_{q,f^\ast A}
\end{aligned}
\end{equation}
Next we define $h_n$. This is a map
\[
h_{n}: \bigoplus_{(u,f) \in N_n}(P_{(f|u|)^\ast B, |u|, f^\ast A})_{u'}(B',A')\lra
\bigoplus_{(u,f) \in N_{n+1}}(P_{(f|u|)^\ast B, |u|, f^\ast A})_{u'}(B',A')
\]
The summand on the left  attached to 
\[
(u,f) = (U_0 \lra \dots \lra
U_n\lra W) \in N_n(\uuu/W)
\] 
is given by
\[
\bigoplus_{u'=p|u|q}\BBB_q(B',(f|u|)^\ast B)\otimes \AAA_p(f^\ast A,A')
\]
We send an element $b\otimes a\in \BBB_q(B',(f|u|)^\ast B)\otimes
\AAA_p(f^\ast A,A')$ to the element of
\[
\BBB_{1_{V'}}(B',(f|uq|)^\ast B)\otimes \AAA_p(f^\ast A,A')
\subset (P_{(f|uq|)^\ast B, |uq|, f^\ast A})_{u'}(B',A') 
\]
given by $\delta_{q,(f|u|)^\ast B}^{-1}b\otimes a$. Here
$(P_{(f|uq|)^\ast B, |uq|, f^\ast A})_{u'}$ is considered as being attached to
\[
(\bullet \xrightarrow{q} U_0 \lra \dots \lra
U_n\lra W) \in N_{n+1}(\uuu/W)
\]

We now check that $h$ has the required properties. Let $b\otimes a$
be as in the previous paragraph.  We compute for $n\ge 0$
\begin{equation}
\label{ref-3.15-32}
\begin{aligned}
\partial_0 h_n(b\otimes a)
&=\delta^r_q(\delta_{q,(f|u|)^\ast
  B}^{-1}b\otimes a)\\ 
& =\delta_{q,(f|u|)^\ast
  B}\delta_{q,(f|u|)^\ast
  B}^{-1}b\otimes a\\
&=b\otimes a
\end{aligned}
\end{equation}
If $0<i<n+1$ then
\begin{equation}
\label{ref-3.16-33}
\begin{aligned}
\partial_i h_n(b\otimes a)&=\delta_{q,(f|u|)^\ast
  B}^{-1}b\otimes a
\end{aligned}
\end{equation}
and if $i=n+1$, $n>0$ then
\begin{equation}
\label{ref-3.17-34}
\begin{aligned}
\partial_{n+1} h_n(b\otimes a)&=\delta^l_{u_{n-1}}(\delta_{q,(f|u|)^\ast
  B}^{-1}b\otimes a)\\
&=\delta_{q,(f|u|)^\ast
  B}^{-1}b\otimes a \delta_{u_{n-1},f^\ast A}
\end{aligned}
\end{equation}
If $n=0$, $i=1$ then the same computation yields
\begin{equation}
\label{ref-3.18-35}
\partial_{1} h_0(b\otimes a)=\delta_{q,f^\ast
  B}^{-1}b\otimes a \delta_{q,f^\ast A}
\end{equation}
Now assume $n>0$. We compute
\begin{equation}
\label{ref-3.19-36}
\begin{aligned}
 h_{n-1}\partial_0(b\otimes a)&=h_{n}\delta^r_{u_0}(b\otimes a)\\
&=h_{n}(\delta_{u_0,(f| \partial_0 u|)^\ast B}b\otimes a)\\
&=\delta_{u_0q,(f|\partial_0 u|)^\ast
  B}^{-1}\delta_{u_0,(f|\partial_0 u|)^\ast B}b\otimes a\\
&=\delta_{q,(f|u|)^\ast
  B}^{-1}b\otimes a
\end{aligned}
\end{equation}
and for $0<i<n$
\begin{equation}
\label{ref-3.20-37}
\begin{aligned}
 h_{n-1}\partial_i(b\otimes a)&=h_{n-1}(b\otimes a)\\
&=\delta_{q,(f|u|)^\ast
  B}^{-1}b\otimes a
\end{aligned}
\end{equation}
Finally
\begin{equation}
\label{ref-3.21-38}
\begin{aligned}
h_{n-1}\partial_n(b\otimes a)&=h_n\delta^l_{u_{n-1}}(b\otimes a)\\
&=h_n(b\otimes a\delta_{u_{n-1},f^\ast A})\\
&=\delta_{q,(f|u|)^\ast
  B}^{-1}b\otimes a\delta_{u_{n-1},f^\ast A}
\end{aligned}
\end{equation}
The conditions \eqref{ref-3.12-29} now follow by combining
\eqref{ref-3.13-30} \eqref{ref-3.14-31} \eqref{ref-3.15-32} \eqref{ref-3.16-33} \eqref{ref-3.17-34} \eqref{ref-3.18-35} \eqref{ref-3.19-36} \eqref{ref-3.20-37} \eqref{ref-3.21-38}.
\end{proof}

\section{The Cohomology Comparison Theorem}\label{ref-4-39}

\subsection{Main result}
Let $\aaa$ and $\bbb$ be $k$-linear prestacks on $\mathcal{U}$ with
associated fibered graded categories $\AAA$ and $\BBB$. The following
is our main result.
\begin{theorem}\label{ref-4.1-40}
  The functor $\Pi^{\ast}: \Bimod(\aaa,\bbb) \lra
  \mathsf{Bimod}_{\uuu}(\AAA, \BBB)$ induces a fully faithful functor
\[
\Pi^{\ast}: D(\Bimod(\aaa,\bbb)) \lra D(\mathsf{Bimod}_{\uuu}(\AAA,
\BBB))
\]
In particular, for $M, N \in
\Bimod(\aaa,\bbb)$, there are isomorphisms
$$\Ext^i_{\Bimod(\aaa,\bbb)}(M,N) \cong \Ext^i_{\mathsf{Bimod}_{\uuu}(\AAA, \BBB)}(\Pi^{\ast}M, \Pi^{\ast}N)$$
for all $i$.
\end{theorem}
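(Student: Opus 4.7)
The plan is to reduce the fully faithful statement at the level of derived categories to an $\Ext$-comparison on projective generators, and then upgrade by a standard resolution argument. By Proposition~\ref{ref-3.4-10}, $\Pi^\ast$ factors as the equivalence $\Bimod(\aaa,\bbb)\cong \Bimod^{\mathrm{fib}}_\uuu(\AAA,\BBB)$ followed by the inclusion $I$. A five-lemma argument applied to the fibering isomorphism of Definition~\ref{ref-3.3-9} shows that $\Bimod^{\mathrm{fib}}_\uuu(\AAA,\BBB)$ is closed under kernels, cokernels and extensions, hence is a Serre subcategory; in particular $I$ is exact. The objects $P^{\mathrm{fib}}_{B,W,A}$ of \S\ref{ref-3.8-21} form a set of projective generators, so it suffices to prove that for $P=P^{\mathrm{fib}}_{B,W,A}$ and any fibered bimodule $N$, one has $\Ext^0_{\Bimod_\uuu}(IP,IN)=\Hom_{\Bimod^{\mathrm{fib}}_\uuu}(P,N)$ and $\Ext^i_{\Bimod_\uuu}(IP,IN)=0$ for $i>0$.

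The degree-zero identification is immediate from the full faithfulness of $I$. For the higher vanishing, the key input is Lemma~\ref{ref-3.9-26}: unpacked, it says that
\[
\Phi_{A,B,!}\bigl(B(\underline{k})\bigr)\longrightarrow P^{\mathrm{fib}}_{B,W,A}
\]
is a quasi-isomorphism, and each term $\Phi_{A,B,!}(B(\underline{k})_n)=\bigoplus_{v\in N_n}P_{(f|v|)^\ast B,|v|,f^\ast A}$ is a projective in $\Bimod_\uuu(\AAA,\BBB)$ by Lemma~\ref{ref-3.6-18}(1). This supplies an explicit projective resolution of $IP$ in the ambient category. Applying $\Hom_{\Bimod_\uuu}(-,IN)$ and invoking the adjunction $\Phi_{A,B,!}\dashv \Phi^\ast_{A,B}$ then identifies $\RHom_{\Bimod_\uuu}(IP,IN)$ with $\RHom_{\mathsf{Nat}(\uuu/W)}(\underline{k},\Phi^\ast_{A,B}(IN))$.

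The remainder is a chain of reductions using that $N$ is fibered. Proposition~\ref{ref-3.5-14} identifies $\Phi^\ast_{A,B}(IN)$ with $I(\Psi^\ast_{A,B}N)$, so the natural system in play comes from a genuine presheaf on $\uuu/W$. Proposition~\ref{ref-3.8-24} then rewrites $\RHom_{\mathsf{Nat}(\uuu/W)}(\underline{k},I\Psi^\ast_{A,B}N)$ as $\RHom_{\mathsf{Pr}(\uuu/W)}(\underline{k},\Psi^\ast_{A,B}N)$. Finally the slice category $\uuu/W$ has the terminal object $1_W$, so $\Hom_{\mathsf{Pr}(\uuu/W)}(\underline{k},-)$ is exact and coincides with evaluation at $1_W$; by \eqref{ref-3.3-12} this yields $\Psi^\ast_{A,B}(N)(1_W)=N_{1_W}(B,A)$ concentrated in degree zero. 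This matches $\Hom_{\Bimod^{\mathrm{fib}}_\uuu}(P,N)=N_{1_W}(B,A)$ and delivers the required vanishing in positive degrees.

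To pass from projective generators to arbitrary objects, choose for $M\in\Bimod(\aaa,\bbb)$ a projective resolution $P_\bullet\to M$ in $\Bimod^{\mathrm{fib}}_\uuu(\AAA,\BBB)$ by sums of the $P^{\mathrm{fib}}_{B,W,A}$'s. Exactness of $I$ turns $IP_\bullet\to IM$ into an exact resolution whose terms are $\Hom_{\Bimod_\uuu}(-,IN)$-acyclic by the previous step, so both $\RHom_{\Bimod(\aaa,\bbb)}(M,N)$ and $\RHom_{\Bimod_\uuu}(\Pi^\ast M,\Pi^\ast N)$ are computed by the same complex $\Hom(P_\bullet,N)$. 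This proves fully faithfulness on $D^-$, and the extension to unbounded derived categories follows by the standard truncation and K-projective/K-injective machinery available in Grothendieck abelian categories. The genuinely hard step is the construction of the resolution in the second paragraph, which is exactly the content of Lemma~\ref{ref-3.9-26}; everything else is formal.
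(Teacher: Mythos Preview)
Your proof is correct and follows essentially the same route as the paper: reduce via Proposition~\ref{ref-3.4-10} to the inclusion $I$, test on the projective generators $P^{\mathrm{fib}}_{B,W,A}$, and compute $\RHom_{\Bimod_\uuu}(IP^{\mathrm{fib}}_{B,W,A},IN)$ by the identical chain Lemma~\ref{ref-3.9-26} $\Rightarrow$ adjunction $\Rightarrow$ Proposition~\ref{ref-3.5-14} $\Rightarrow$ Proposition~\ref{ref-3.8-24} $\Rightarrow$ evaluation at the terminal object $1_W$. The only difference is organizational: the paper passes from generators to all of $D(\Bimod^{\mathrm{fib}})$ in one stroke by observing that the $P^{\mathrm{fib}}_{B,W,A}$ are compact generators and that the $M$ for which \eqref{ref-4.1-41} is an isomorphism form a (localizing) triangulated subcategory, whereas you do this by hand via projective resolutions and an acyclicity argument, first on $D^-$ and then invoking K-projective/K-injective machinery for the unbounded case; both are standard and your extra observation that $\Bimod^{\mathrm{fib}}_\uuu$ is a Serre subcategory is correct but not actually needed in the paper's packaging.
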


\begin{proof}
  According to Proposition \ref{ref-3.4-10} we may replace
  $\mathsf{Bimod}(\aaa, \bbb)$ by
  $\Bimod^{\text{fib}}_{\mathcal{U}}(\AAA,\BBB)$. As before we denote the
  inclusion $\Bimod^{\text{fib}}_{\mathcal{U}}(\AAA,\BBB)\rightarrow
  \mathsf{Bimod}_{\uuu}(\AAA, \BBB)$ by $I$.

For all objects $M,N\in D(\Bimod^{\text{fib}}_{\mathcal{U}}(\AAA,\BBB))$ we have
to prove that the canonical map 
\begin{equation}
\label{ref-4.1-41}
\RHom_{\Bimod^{\text{fib}}_{\mathcal{U}}(\AAA,\BBB)}(M,N)
\lra
\RHom_{\Bimod_{\mathcal{U}}(\AAA,\BBB)}(I\!M,I\!N)
\end{equation}
is an isomorphism. We claim that it is sufficient to check this for
$M=P^{\text{fib}}_{B,W,A}$. 

To see this note that the projective objects $P^{\text{fib}}_{A,W,B}$
form a system of compact generators for
$D(\Bimod^{\text{fib}}_{\mathcal{U}}(\AAA,\BBB))$.  If we fix $N$ then
the $M$ for which \eqref{ref-4.1-41} is an isomorphism is a triangulated
subcategory of $D(\Bimod^{\text{fib}}(\AAA,\BBB))$. If this subcategory
contains the generators $P^{\text{fib}}_{B,W,A}$ then it is
everything.

We now assume $M=P^{\text{fib}}_{B,W,A}$. Then we have
\[
\RHom_{\Bimod^{\text{fib}}_{\mathcal{U}}(\AAA,\BBB)}(P^{\text{fib}}_{B,W,A},N)=
N_{1_W}(B,A)
\]
On the other hand we have
\begin{align*}
\RHom_{\Bimod_{\mathcal{U}}(\AAA,\BBB)}(I\!P^{\text{fib}}_{A,W,B},I\!N)&=
\RHom_{\Bimod_{\mathcal{U}}(\AAA,\BBB)}(L\Phi_{A,B,!}\underline{k},I\!N)&& \text{(Lemma \ref{ref-3.9-26})}\\
&=\RHom_{\Nat(\mathcal{U}/W)}(\underline{k},\Phi^\ast_{A,B}I\!N)&&\text{(Adjointness)}\\
&=\RHom_{\Nat(\mathcal{U}/W)}(\underline{k},I\Psi^\ast_{A,B}\!N)&&\text{(Prop.\ \ref{ref-3.5-14})}\\
&=\RHom_{\Pre(\mathcal{U}/W)}(\underline{k},\Psi^\ast_{A,B}N)&& \text{(Prop.\ \ref{ref-3.8-24})}\\
&=(\Psi^\ast_{A,B}N)(1_W) &&\text{($1_W$ is the final object of $\mathcal{U}/W$)}\\
&=N_{1_W}(B,A)&&\text{\eqref{ref-3.3-12}}
\end{align*}
which is the same. 
\end{proof}
The following corollary was
announced in \cite{lowen8}:
\begin{corollary}
  Let $\aaa$ be a $k$-flat $k$-linear prestack on $\mathcal{U}$ with
  associated fibered  category $\AAA$, and let $\CC(\aaa) =
  \CC(\AAA)$ be the Hochschild complex of $\aaa$ defined in
  \cite{lowen8}. Then there is a quasi-isomorphism
\[
\CC(\aaa) \cong \RHom_{\Bimod(\aaa)}(\aaa, \aaa).
\]
\end{corollary}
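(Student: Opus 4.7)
The plan is to chain together two identifications. First, apply Theorem \ref{ref-4.1-40} in the diagonal case $\bbb=\aaa$ and $M=N=\aaa$, where on the right hand side $\aaa$ denotes the diagonal $\aaa$-$\aaa$-bimodule $\aaa^U(A,A')=\aaa(U)(A,A')$. Observe that $\Pi^{\ast}(\aaa)=\AAA$: indeed, by \eqref{ref-3.2-8} and the construction of $\AAA$ from $\aaa$ in \S\ref{ref-2.2-3},
\[
(\Pi^{\ast}\aaa)_u(A,A')=\aaa^V(A,u^{\ast}A')=\aaa(V)(A,u^{\ast}A')=\AAA_u(A,A').
\]
Hence Theorem \ref{ref-4.1-40} yields a quasi-isomorphism
\[
\RHom_{\Bimod(\aaa)}(\aaa,\aaa)\cong \RHom_{\Bimod_{\uuu}(\AAA,\AAA)}(\AAA,\AAA).
\]
It therefore suffices to identify the right-hand side with $\CC(\AAA)$.

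Second, invoke the explicit construction of the Hochschild complex in \cite{lowen8}, mirroring the discussion of \S\ref{ref-3.9-22} (see Remark \ref{remnatbimod} together with the remark following Lemma \ref{ref-3.9-26}, which already records the agreement of $\CC(\uuu,N)$ with the Hochschild complex of \cite{lowen8} in the case of natural systems). In general $\CC(\AAA)$ is the Hom-complex arising from a bar-type complex $B(\AAA)$ whose terms are direct sums of the projective bimodules $P_{B,u,A}$ of \S\ref{ref-3.7-15} indexed by the simplicial nerve of $\uuu$. Under the $k$-flatness hypothesis on $\aaa$, the Hom-modules $\AAA_u(A,B)=\aaa(V)(A,u^{\ast}B)$ are $k$-flat, so this bar complex is a genuine resolution of the diagonal bimodule $\AAA$ by objects adapted to $\RHom_{\Bimod_{\uuu}(\AAA,\AAA)}(-,\AAA)$; consequently
\[
\CC(\AAA)\cong \RHom_{\Bimod_{\uuu}(\AAA,\AAA)}(\AAA,\AAA).
\]

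Combining the two isomorphisms with the definition $\CC(\aaa)=\CC(\AAA)$ gives the corollary. The only nontrivial obstacle is the second step, which is not proved here but is the content of \cite{lowen8}: one needs both that the bar complex for $\AAA$ computes the derived $\RHom$ and that $k$-flatness of the prestack $\aaa$ propagates to give flatness of all relevant $\AAA_u(A,B)$, so that tensor products involved in the bar resolution remain acyclic. Everything else is a formal consequence of Theorem \ref{ref-4.1-40}.
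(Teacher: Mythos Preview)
Your proposal is correct and follows essentially the same approach as the paper: apply Theorem \ref{ref-4.1-40} with $\bbb=\aaa$ and use $\Pi^{\ast}(\aaa)=\AAA$, then invoke the identification $\CC(\AAA)\cong\RHom_{\mathsf{Bimod}_{\uuu}(\AAA)}(\AAA,\AAA)$ from \cite{lowen8}. The paper simply cites \cite[Proposition~3.13]{lowen8} for the second step rather than sketching the bar-resolution argument, but the logical structure is identical.
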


\begin{proof}
It was shown in \cite[Proposition 3.13]{lowen8} that $$\CC(\AAA) \cong \RHom_{\mathsf{Bimod}_{\uuu}(\AAA)}(\AAA, \AAA).$$
To obtain the desired result, it suffices to apply Theorem \ref{ref-4.1-40} with $\BBB = \AAA$ and note that $\Pi^{\ast}(\aaa) = \AAA$.
\end{proof}

\section{The Special Cohomology Comparison Theorem revisited}\label{ref-5-42}
In this section, we deduce
the original Special Cohomology Comparison Theorem~\cite{gerstenhaberschack2} from Theorem \ref{ref-4.1-40}.

If $\AAA$ is a $k$-linear category then we define $\bar{\AAA}$ to be the
endomorphism ring of the generator $P=\oplus_A \mathfrak{a}(-,A)$ of
$\Mod(\mathfrak{a})$. I.e. by Yoneda
\[
\bar{\AAA}=\prod_{B \in \AAA} \bigoplus_{A \in \AAA} \AAA(B,A).
\]
Elements of $\bar{\AAA}$ can be represented by column-finite matrices with (row, column) indices given by $(A,B)$. We denote the idempotent corresponding to $1_A \in \AAA(A,A)$ by $e_A$. 
\begin{lemma} \label{ref-5.1-43} Let $\AAA$ and $\BBB$ be linear categories. Then the functor
\[
D(\Bimod(\mathfrak{a},\mathfrak{b})) \lra D(\Bimod(\bar{\mathfrak{a}},
\bar{\mathfrak{b}})):M\mapsto\prod_{B \in \AAA} \bigoplus_{A \in \AAA} M(B,A)
\]
is fully faithful.
\end{lemma}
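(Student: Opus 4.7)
The plan is in three stages: exactness with an abelian retraction via idempotents, abelian full faithfulness, and the derived upgrade via closure of the essential image under extensions.

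First, I would observe that $F \colon M \mapsto \bar{M}$ is exact, since $\prod$ and $\bigoplus$ of $k$-modules are exact. The orthogonal idempotents $e_A \in \bar{\AAA}$ and $e_B \in \bar{\BBB}$ yield a natural isomorphism $M(B, A) \cong e_A \bar{M} e_B$, which exhibits an exact retraction $G \colon \Bimod(\bar{\AAA}, \bar{\BBB}) \to \Bimod(\AAA, \BBB)$, $G(\tilde{M})(B, A) := e_A \tilde{M} e_B$, satisfying $G \circ F = \mathrm{id}$.

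Next, for abelian full faithfulness I would show that a bimodule morphism $\phi \colon \bar{M} \to \bar{N}$ is uniquely determined by its restrictions $\phi(B, A) := e_A \phi e_B \colon M(B, A) \to N(B, A)$. Indeed, $\bar{\BBB}$-linearity gives $\phi(x) e_B = \phi(x e_B)$; the finite-support direct sum $\bar{M} e_B = \bigoplus_A M(B, A)$ reduces $\phi(x e_B) = \sum_A \phi(B, A)(e_A x e_B)$ to a finite sum; and the product decomposition $\bar{N} = \prod_B \bar{N} e_B$ recovers $\phi(x)$ from its components. The family $(\phi(B, A))$ assembles into a bimodule morphism $M \to N$ (compatibility with the actions follows from $\phi$'s $\bar{\AAA}$- and $\bar{\BBB}$-linearity), yielding the inverse of the canonical map $\Hom(M, N) \to \Hom(\bar{M}, \bar{N})$.

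For the derived statement, let $\mathcal{E} \subset \Bimod(\bar{\AAA}, \bar{\BBB})$ denote the essential image of $F$: it consists of $\tilde{M}$ satisfying $\tilde{M} \cong \prod_B \tilde{M} e_B$ and $\tilde{M} e_B \cong \bigoplus_A e_A \tilde{M} e_B$. By Steps~1--2, $F \colon \Bimod(\AAA, \BBB) \to \mathcal{E}$ is an equivalence of Grothendieck abelian categories with quasi-inverse $G|_\mathcal{E}$. I would then verify that $\mathcal{E}$ is closed under extensions in $\Bimod(\bar{\AAA}, \bar{\BBB})$: given a short exact sequence $0 \to \tilde{M}' \to E \to \tilde{M}'' \to 0$ with endpoints in $\mathcal{E}$, the exact operations $(-) \cdot e_B$, $e_A \cdot (-)$, $\prod_B$ and $\bigoplus_A$ together with the five-lemma force $E$ to satisfy the defining isomorphisms, hence $E \in \mathcal{E}$. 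By a standard triangulated-category argument for Grothendieck abelian subcategories with exact inclusion closed under extensions, this promotes the abelian equivalence to a fully faithful embedding $D(\Bimod(\AAA, \BBB)) \simeq D(\mathcal{E}) \hookrightarrow D(\Bimod(\bar{\AAA}, \bar{\BBB}))$, which is the required derived full faithfulness.

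The main obstacle is the derived-level step, since $\mathcal{E}$ is not closed under sub-objects in $\Bimod(\bar{\AAA}, \bar{\BBB})$ and $F$ has no honest left or right adjoint on the whole ambient category; the comparison of higher $\Ext$-groups thus cannot be obtained directly from injective resolutions transported through $F$. Instead it must be obtained from closure under extensions together with the existence of enough injectives in $\mathcal{E}$ (inherited from $\Bimod(\AAA, \BBB)$) by an inductive argument on the length of Yoneda extensions, reducing to the $i = 1$ case settled above.
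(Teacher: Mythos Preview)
Your retraction $G(N)(B,A)=e_A N e_B$ is exactly the paper's one-line proof, so through Step~2 you are spelling that line out correctly and in more detail.

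The gap is Step~3. Closure of the essential image $\mathcal{E}$ under extensions, even with enough injectives in $\mathcal{E}$, does \emph{not} force $D(\mathcal{E})\hookrightarrow D(\Bimod(\bar{\AAA},\bar{\BBB}))$ to be fully faithful; there is no ``standard triangulated-category argument'' to this effect. Your Yoneda induction breaks at the dimension shift: from $0\to Y\to I\to Y'\to 0$ with $I$ injective in $\mathcal{E}$ you get $\Ext^n_{\mathcal{E}}(X,Y)\cong\Ext^{n-1}_{\mathcal{E}}(X,Y')$, but the matching isomorphism in the ambient category requires $\Ext^{>0}(X,I)=0$ there, which you have not shown --- and this is precisely the difficulty, since $F$ preserves neither injectives nor projectives. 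A counterexample to the general principle you invoke: for $A=kQ/(\beta\alpha)$ with $Q\colon 1\xrightarrow{\alpha}2\xrightarrow{\beta}3$, the full subcategory $\mathcal{E}=\{M\in\Mod(A):e_2M=0\}\cong\Mod(k\times k)$ is Grothendieck, closed under extensions, and has enough injectives, yet $\Ext^2_{\mathcal{E}}(S_1,S_3)=0$ while $\Ext^2_{\Mod(A)}(S_1,S_3)=k$.

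What actually works is to factor $F$ as
\[
\Bimod(\AAA,\BBB)\ \xrightarrow{\,F_1\,}\ \Bimod(\bar{\AAA},\BBB)\ \xrightarrow{\,F_2\,}\ \Bimod(\bar{\AAA},\bar{\BBB}),
\]
with $F_1(M)(B)=\bigoplus_A M(B,A)$ and $F_2(N)=\prod_B N(B)$. Then $F_1$ is \emph{left} adjoint to the exact functor $N\mapsto\bigl((B,A)\mapsto e_A N(B)\bigr)$ with unit the identity, while $F_2$ is \emph{right} adjoint to the exact functor $P\mapsto(B\mapsto Pe_B)$ with counit the identity. All four functors are exact, so these adjunctions and their (co)units pass unchanged to the derived level; hence each $D(F_i)$ is fully faithful, and therefore so is $D(F)=D(F_2)\circ D(F_1)$.
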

\begin{proof} This follows from the fact that a left inverse is given by
\[
N\mapsto ((B,A)\mapsto e_A N e_B)\qed
\]
\def\qed{}\end{proof}
Let $\mathcal{U}$ be a poset.
If $\mathfrak{a}$ is $\mathcal{U}$-graded category then we denote by
$\tilde{\AAA}$ be 
underlying $k$-linear category of $\AAA$. More precisely
\[
\tilde{\AAA}(B_V,A_U) = \begin{cases} 
\AAA_{V\leq U}(B_V, A_U) &\text{if} \,\, V \leq U\\ 
0 &\text{otherwise.} \end{cases}
\]
for $B_V\in \BBB_V$, $A_U\in \AAA_U$. 
\begin{lemma}\label{ref-5.2-44} The functor
\[
D(\Bimod_{\mathcal{U}}(\AAA,\BBB)) \lra 
D(\Bimod(\tilde{\AAA},\tilde{\BBB})):N \lra \tilde{N}
\]
with
\[
\tilde{N}(B_V,A_U)=\begin{cases} N_{V\leq U}(B_V,A_U)
&\text{if} \,\, V \leq U \\ 0 & \text{otherwise.}
\end{cases}
\]
for  $B_V\in \BBB_V$, $A_U\in \AAA_U$ is fully faithful.
\end{lemma}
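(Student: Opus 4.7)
The plan is to reinterpret $\tilde{(-)}$ via the module-category picture of Remark \ref{ref-3.7-19} and then invoke the general behavior of Kan extension along a fully faithful $k$-linear functor.

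First, I would identify both sides as module categories. By Remark \ref{ref-3.7-19}, $\Bimod_{\uuu}(\AAA,\BBB)\cong\Mod(\TTT)$; since $\uuu$ is a poset each arrow $u: V\to U$ is encoded by the relation $V\leq U$, so the objects of $\TTT$ reduce to triples $(B,V\leq U,A)$ with $V\leq U$, and the morphism formula from Remark \ref{ref-3.7-19} simplifies to
\[
\TTT((B',V'\leq U',A'),(B,V\leq U,A)) = \tilde{\BBB}(B',B)\otimes \tilde{\AAA}(A,A'),
\]
which is nonzero only when $V'\leq V$ and $U\leq U'$. By the standard correspondence of bimodules over two $k$-linear categories with modules over their tensor product, $\Bimod(\tilde{\AAA},\tilde{\BBB})\cong\Mod(\SSS)$ where $\SSS$ is the ``ungraded'' analog of $\TTT$, with objects $(B_V,A_U)$ for \emph{all} pairs $V,U\in\uuu$ (no relation imposed) and the same formula for morphism spaces.

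Next, I would exhibit the obvious inclusion $F:\TTT\hookrightarrow\SSS$, $(B,V\leq U,A)\mapsto (B_V,A_U)$, as a fully faithful $k$-linear functor, and identify $\tilde{(-)}$ with the left Kan extension $F_!:\Mod(\TTT)\to\Mod(\SSS)$, the left adjoint of restriction $F^{\ast}$. The coend formula $(F_!N)(s)=\int^{t}\SSS(s,F(t))\otimes N(t)$ combined with fully faithfulness of $F$ and co-Yoneda yields $(F_!N)(F(t_0))=N(t_0)$ on the image. For $s=(B_V,A_U)$ with $V\not\leq U$, any nonzero morphism $s\to F(t)$ in $\SSS$ with $F(t)=(B_{V_t},A_{U_t})$ would require $V\leq V_t$ and $U_t\leq U$, which combined with $V_t\leq U_t$ forces $V\leq U$ by transitivity --- contradiction. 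Hence $\SSS(s,F(t))=0$ for all $t$, the coend collapses, and $(F_!N)(s)=0$, matching the defining formula of $\tilde{N}$.

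This description makes $F_!$ manifestly exact (componentwise it is either the identity or zero), as is $F^{\ast}$ (precomposition), and fully faithfulness of $F$ guarantees $F^{\ast}F_!=\operatorname{id}$. Both functors therefore pass unchanged to the derived categories, and the adjunction $F_!\dashv F^{\ast}$ yields
\[
\RHom_{\Mod(\SSS)}(F_!M,F_!N) \cong \RHom_{\Mod(\TTT)}(M,F^{\ast}F_!N) \cong \RHom_{\Mod(\TTT)}(M,N)
\]
for all $M,N\in D(\Mod(\TTT))$, proving the lemma. The main technical point --- and the only place where the poset hypothesis enters essentially --- is the vanishing $\SSS(s,F(t))=0$ for $V\not\leq U$, which depends on transitivity of $\leq$; without this, $F_!$ generally fails to be exact and the adjunction argument breaks down.
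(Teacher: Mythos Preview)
Your proof is correct and rests on the same mechanism as the paper's: the restriction functor $F^{\ast}$ (the paper's $\check{(-)}$, which ``forgets the values of $N(B_V,A_U)$ for $V\not\leq U$'') is a one-sided inverse to $\tilde{(-)}$. The paper states only this and stops; you go further by recasting both bimodule categories as module categories over $\TTT$ and $\SSS$, identifying $\tilde{(-)}$ with the left Kan extension $F_!$ along the fully faithful inclusion $F:\TTT\hookrightarrow\SSS$, and then invoking the adjunction $F_!\dashv F^{\ast}$ together with exactness of both functors to pass to derived categories.

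This extra structure is not decorative. A bare left inverse $G$ with $GF\cong\mathrm{id}$ yields only faithfulness of $F$ in general; fullness requires something more, and your adjunction argument supplies it cleanly (the unit of $F_!\dashv F^{\ast}$ is an isomorphism precisely because $F$ is fully faithful, and exactness lets this persist underived). The paper is presumably relying on the reader to recognize that $\check{(-)}$ is the right adjoint and fill in exactly what you wrote. Your explicit verification that $(F_!N)(s)=0$ for $V\not\leq U$ via the transitivity chain $V\leq V_t\leq U_t\leq U$ also makes precise where the poset hypothesis enters, which the paper leaves implicit.
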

\begin{proof} 
  A left inverse is given by associating to $N\in
  D(\Bimod(\tilde{\AAA},\tilde{\BBB}))$ the object $\check{N}$ in
  $D(\Bimod_{\mathcal{U}}(\AAA,\BBB))$ such that 
$\check{N}_{V\le U}(B_V,A_U)=N(B_V,A_U)$.
In other words the left inverse simply forgets the values of $N(B_V,A_U)$ for $V\not\leq U$.
\def\qed{}\end{proof}
If $\aaa$ is a $k$-linear prestack on $\mathcal{U}$ then we write
$\aaa!=\bar{\tilde{\AAA}}$ where $\mathfrak{\AAA}$ is the 
$\mathcal{U}$-graded category associated to $\mathcal{A}$. Concretely
\[
\mathcal{A}!=\prod_{V \in \uuu, A_V \in \aaa(V)} \bigoplus_{V \leq U, A_U \in \aaa(U)} \mathcal{A}(V)(A_V, A_U|_V)
\]
\begin{theorem}\label{ref-5.3-45}
  Let $\aaa$ and $\bbb$ be $k$-linear prestacks on a poset $\uuu$.
  The canonical functor
\[
(-)!: \mathsf{Bimod}(\aaa, \bbb) \lra 
\mathsf{Bimod}(\mathcal{A}!, \mathcal{B}!): M \lra M!
\]
with
\[
M! = \prod_{V \in \uuu, B_V \in \bbb(V)} \bigoplus_{V \leq U, A \in \aaa(U)} M(V)(B_V, A_U|_V)
\]
induces a fully faithful functor between the respective derived categories.
\end{theorem}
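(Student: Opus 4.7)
The plan is to exhibit the functor $(-)!$ as the composition of three derived functors, each of which has already been shown to be fully faithful, and then read off the conclusion.

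First, I would apply Theorem~\ref{ref-4.1-40} to get that
\[
\Pi^{\ast}: D(\Bimod(\aaa,\bbb)) \lra D(\Bimod_{\uuu}(\AAA,\BBB))
\]
is fully faithful, with no poset hypothesis needed. Then, since $\uuu$ is a poset, Lemma~\ref{ref-5.2-44} gives that
\[
N\mapsto \tilde{N}:D(\Bimod_{\uuu}(\AAA,\BBB)) \lra D(\Bimod(\tilde{\AAA},\tilde{\BBB}))
\]
is fully faithful. Finally Lemma~\ref{ref-5.1-43}, applied to the linear categories $\tilde{\AAA}$ and $\tilde{\BBB}$, shows that
\[
L\mapsto \prod_{B}\bigoplus_A L(B,A):D(\Bimod(\tilde{\AAA},\tilde{\BBB})) \lra D(\Bimod(\aaa!,\bbb!))
\]
is fully faithful (using $\aaa!=\bar{\tilde{\AAA}}$ and $\bbb!=\bar{\tilde{\BBB}}$ by definition). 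Composing three fully faithful functors yields a fully faithful functor, so it suffices to identify this composite with $(-)!$.

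Next I would check the identification on objects. Starting from $M\in \Bimod(\aaa,\bbb)$, the formula \eqref{ref-3.2-8} for $\Pi^{\ast}$ gives, for $V\leq U$ in the poset $\uuu$ (which plays the role of the unique arrow $u:V\to U$), $A_U\in \aaa(U)$, $B_V\in \bbb(V)$,
\[
(\Pi^{\ast}M)_{V\leq U}(B_V,A_U)=M^V(B_V,A_U|_V).
\]
Passing to $\widetilde{\Pi^{\ast}M}$ then simply re-indexes these groups on pairs $(B_V,A_U)$ with $V\leq U$ and fills in zero otherwise, and the final step assembles them into the column-finite matrix
\[
\prod_{V\in\uuu,\,B_V\in \bbb(V)}\bigoplus_{V\leq U,\,A_U\in \aaa(U)} M^V(B_V,A_U|_V),
\]
which is exactly $M!$ as written in the statement. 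Functoriality of the identification is automatic, since each of the three functors is natural.

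The only genuinely substantive step in this argument is Theorem~\ref{ref-4.1-40} itself; Lemmas~\ref{ref-5.1-43} and \ref{ref-5.2-44} are essentially bookkeeping about having a left inverse. Thus the main obstacle has already been overcome upstream, and the remaining task here is just to make sure the three explicit descriptions of the functors compose to the formula defining $(-)!$, which is what the paragraph above does.
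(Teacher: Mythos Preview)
Your proof is correct and follows exactly the same approach as the paper: decompose $(-)!$ as the composite of the three fully faithful functors from Theorem~\ref{ref-4.1-40}, Lemma~\ref{ref-5.2-44}, and Lemma~\ref{ref-5.1-43}. Your additional paragraph verifying that the composite coincides with the stated formula for $M!$ is a welcome elaboration on what the paper leaves implicit.
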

\begin{proof}
Let $\AAA$, $\BBB$ be the $\mathcal{U}$-graded categories associated to $\aaa$,
$\bbb$. 

The functor $D(-)!$ is the composition of the following fully faithful
embeddings
\begin{multline*}
D(\mathsf{Bimod}(\aaa, \bbb))
\xrightarrow{\text{Thm.\ \ref{ref-4.1-40}}}
 D(\mathsf{Bimod}_{\mathcal{U}}(\AAA, \BBB))\\
\xrightarrow{\text{Lem.\ \ref{ref-5.2-44}}}
D(\mathsf{Bimod}(\tilde{\AAA}, \tilde{\BBB}))\xrightarrow{\text{Lem.\ \ref{ref-5.1-43}}} D(\mathsf{Bimod}(\bar{\tilde{\AAA}}, \bar{\tilde{\BBB}}))\qed
\end{multline*}
\def\qed{}\end{proof}
The following corollary is the
Special Cohomology Comparison Theorem in the sense of Gerstenhaber and Schack. 
\begin{corollary}
 Let $\aaa$ be a presheaf of $k$-algebras on a poset $\uuu$. Put
\[
\mathcal{A}!=\prod_{V \in \uuu} \bigoplus_{V \leq U} \mathcal{A}(V)
\]
The canonical functor 
\[
(-)!: \mathsf{Bimod}(\aaa) \lra 
\mathsf{Bimod}(\mathcal{A}!): M \lra M!
\]
with
\[
M! = \prod_{V \in \uuu} \bigoplus_{V \leq U} M(V)
\]
induces a fully faithful functor between the respective derived categories.
\end{corollary}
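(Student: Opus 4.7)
The plan is to deduce the corollary as an immediate specialization of Theorem~\ref{ref-5.3-45}. I would identify a presheaf $\aaa$ of $k$-algebras on $\uuu$ with a $k$-linear prestack on $\uuu$ whose value at each $V$ is the one-object $k$-linear category with endomorphism algebra $\aaa(V)$; the restriction algebra morphisms of the presheaf are then precisely the restriction functors of the prestack, and no additional data is needed in this one-object situation.

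Under this identification, an $\aaa$-bimodule in the classical sense coincides with a bimodule over the associated prestack in the sense of Section~\ref{ref-3-4}. Indeed, since each $\aaa(V)$ has a unique object, the data $M^V(B_V,A_V)$ collapses to a single $k$-module $M^V$ equipped with its $\aaa(V)$-bimodule action, and the prestack restriction morphism $M^U \lra M^V u^\ast$ reduces to the ordinary restriction of bimodules along $u \colon V \lra U$.

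Once this is established, I would apply Theorem~\ref{ref-5.3-45} with $\bbb = \aaa$, which directly yields a fully faithful functor
\[
D((-)!) \colon D(\Bimod(\aaa,\aaa)) \lra D(\Bimod(\mathcal{A}!, \mathcal{A}!)).
\]
It then remains only to check that in the one-object case the formulas of Theorem~\ref{ref-5.3-45} for $\mathcal{A}!$ and $M!$ degenerate to those in the corollary: the indices $A_V, B_V, A_U$ range over singletons, so the inner direct sums and products over objects in $\aaa(V)$ collapse, and the column-finite matrix algebra $\bar{\tilde{\AAA}}$ becomes the product/sum displayed in the statement, with multiplication induced by the presheaf restrictions $\aaa(U) \lra \aaa(V)$.

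I do not anticipate a real obstacle: the argument is purely bookkeeping, confirming that the classical presheaf-of-algebras setting sits inside the prestack framework so that both the bimodule categories and the functor $(-)!$ agree under the identification. All derived-categorical content, in particular the full faithfulness, is already supplied by Theorem~\ref{ref-5.3-45}.
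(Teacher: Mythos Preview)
Your proposal is correct and follows essentially the same approach as the paper: view a presheaf of $k$-algebras as a $k$-linear prestack whose fibers are one-object categories, and observe that the formulas in Theorem~\ref{ref-5.3-45} then collapse to those in the corollary. The paper's proof says exactly this in two sentences; your version simply spells out the bookkeeping in more detail.
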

\begin{proof} If we view a presheaf as a prestack then the categories
$\mathcal{A}(V)$ contain only one object which is left out of the
notation. This gives the simplified form of $\mathcal{A}!$ and $M!$ is the statement
of this corollary. 
\end{proof}
\appendix
\section{Relation with universal localization}
Let $\AAA$, $\BBB$ be $\mathcal{U}$-graded fibered categories equipped with
a normalized cleavage. 
Let $u:V\rightarrow U$, $p:U\rightarrow U'$,
$B\in \AAA_V$,  $A'\in \AAA_{U'}$. The natural transformation between
the functors
\begin{align*}
M&\mapsto M_u(B,p^\ast A')\\
M&\mapsto M_{pu}(B,A')
\end{align*}
corresponding to $\delta^l_p$ is given by left multiplication by $\delta_{p,A'}$
(see \S\ref{ref-3.7-15}). Hence $M$ is fibered if and only if
for all $u,p,B,A'$ as above $\Bimod_{\mathcal{U}}(\AAA,\BBB)(-,M)$ inverts the
corresponding $\delta^l_p$.  Thus if we denote the collections of such
morphisms by $\Sigma$ then we get
\[
\Bimod^{\text{fib}}_{\mathcal{U}}(\mathfrak{b},\mathfrak{a})=
\{M\in \Bimod_{\mathcal{U}}(\mathfrak{b},\mathfrak{a})\mid \text{$M$ inverts all morphisms in $\Sigma$}\}
\]
Using the description of
$\Bimod_{\mathcal{U}}(\mathfrak{b},\mathfrak{a})$ as a module category 
in Remark \ref{ref-3.7-19}
we also obtain
\[
\Bimod^{\text{fib}}_{\mathcal{U}}(\mathfrak{b},\mathfrak{a})=\Mod(\Sigma^{-1}
\mathfrak{t})
\]
This shows that the relation between
$\Bimod^{\text{fib}}_{\mathcal{U}}(\mathfrak{b},\mathfrak{a})$ and
$\Bimod_{\mathcal{U}}(\mathfrak{b},\mathfrak{a})$ is controlled by a universal
localization (which is given by $\Pi: \TTT \lra \RRR$ as in Remark \ref{remunderlin}). Unfortunately this is not an Ore localization. The
homological behavior of a general universal localization seems difficult
to understand. See e.g.\ \cite{ARS,schofield} for partial
results.

Theorem \ref{ref-4.1-40} may be interpreted as saying
that $\Sigma^{-1} \mathfrak{t}/\mathfrak{t}$ is \emph{stably flat}
(see \cite{NR}). This implies for example that the $K$-theory of
the categories of bimodules and fibered bimodules are related by a long exact sequence
\cite[Thm 0.1]{neeman2}.

\def\cprime{$'$}
\providecommand{\bysame}{\leavevmode\hbox to3em{\hrulefill}\thinspace}
\bibliography{BibCCT}
\bibliographystyle{amsplain}

\end{document}